\def\defcommand{\@ifstar\defcommand@S\defcommand@N} \def\defcommand@S#1{\let#1\outer\renewcommand*#1} \def\defcommand@N#1{\let#1\outer\renewcommand#1} \makeatother
\newtheorem{theorem}{Theorem}
\newtheorem*{theorem*}{Theorem}
\newtheorem{proposition}{Proposition}
\newtheorem{conjecture}{Conjecture}
\newtheorem{corollary}{Corollary}
\newtheorem{lemma}{Lemma}
\theoremstyle{remark}
\theoremstyle{definition}
\newtheorem{definition}{Definition}
\defcommand{\A}{\mathcal{A}}  
\defcommand{\F}{\mathcal{F}}  
\defcommand{\C}{\mathcal{C}}                  
\newcommand{\R}{\mathcal{R}}
\newcommand{\B}{\mathcal{B}}
\renewcommand{\P}{\mathcal{P}}  
\renewcommand{\H}{\mathcal{H}}
\renewcommand{\S}{\mathcal{S}}
\newcommand{\E}{\mathcal{E}}		
\newcommand{\torp}[2]{\texorpdfstring{#1}{#2}}
\title{New Properties of Harmonic Polygons}
\author{Ronaldo Garcia}
\thanks{R. Garcia,  Inst. de Matem\'atica e Estatística,\\ Univ.Federal de Goiás, Brazil. \texttt{ragarcia@ufg.br}}
\author{Dan Reznik}
\thanks{D. Reznik$^*$, Data Science Consulting Ltd., Rio de Janeiro, Brazil. \texttt{dreznik@gmail.com}}
\author{Pedro Roitman}
\thanks{P. Roitman, Departamento de Matemática,\\ Universidade de Brasília, Brazil. \texttt{roitman@mat.unb.br}}
\begin{document}

\maketitle
\vspace{-2em}
\begin{abstract}
Via simulation, we revisit the Poncelet family of ``harmonic polygons'', much studied in the 2nd half of the XIX century by famous geometers such as Simmons, Tarry, Neuberg, Casey, and others. We review its (inversive and projective) construction, identify some new conservations, and contrast it, via its invariants, to several other recently studied Poncelet families.
\vskip .3cm
\noindent\textbf{Keywords} harmonic polygon, Poncelet, Brocard, invariants, projection, homothetic, inversion, symmetric polynomials.
\vskip .3cm
\noindent \textbf{MSC} {51M04
\and 51N20 \and 51N35\and 68T20}
\end{abstract}

\section{Introduction}
Following results by Brocard and Lemoine in the first half of the XIX century, {\em harmonic polygons} were discovered and intensely studied decades later by such geometers as Casey, McCay, Neuberg, Simmons, Tarry, Vigarié, and others, see \cite[Chapter VIII]{simmons1886} for the historical background.

\begin{figure}[H]
\begin{subfigure}{0.49\textwidth}
\includegraphics[trim=0 0 0 15,clip,width=0.95\linewidth,frame]{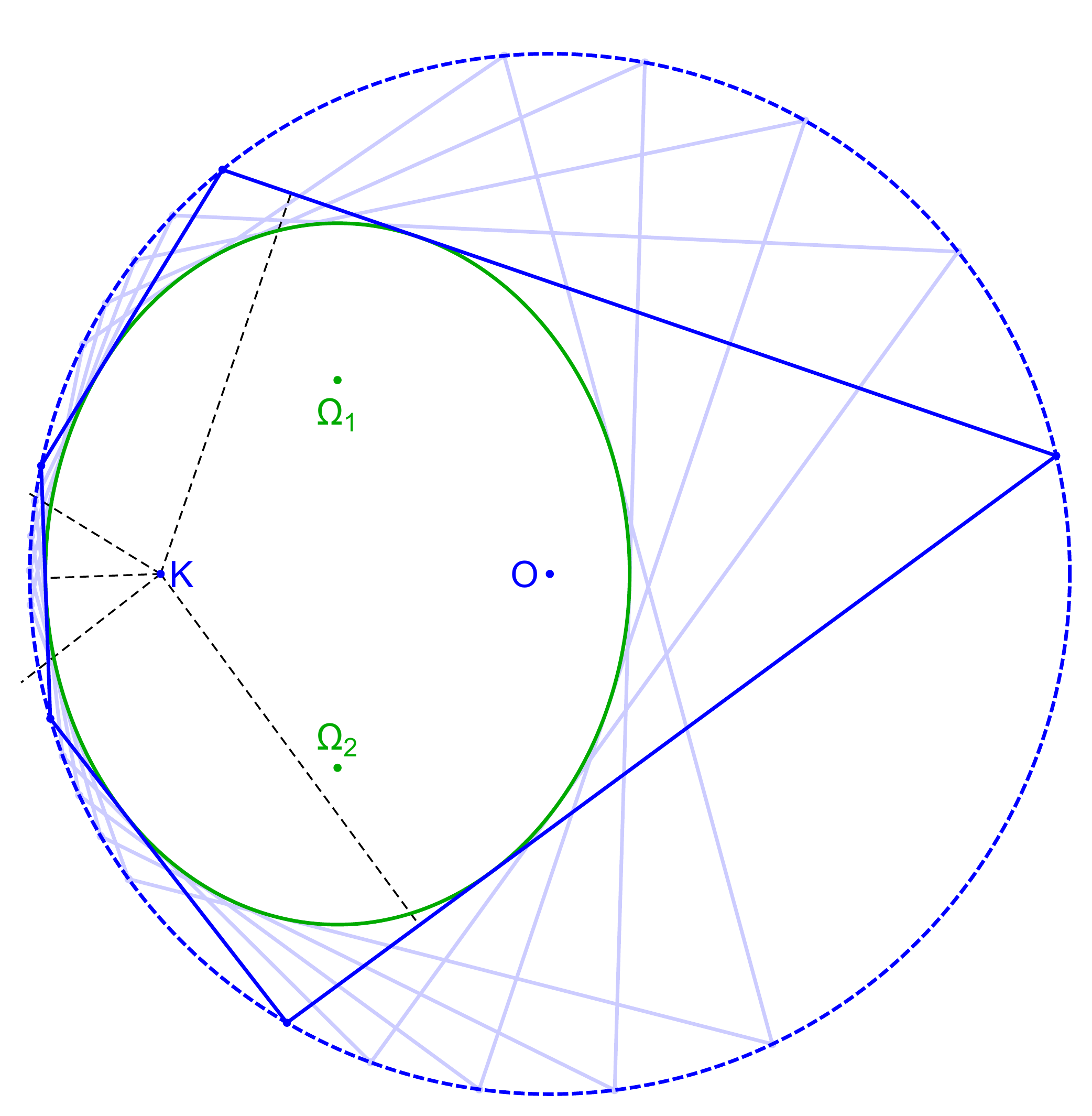}
\end{subfigure}
\begin{subfigure}{0.5\linewidth}
\includegraphics[trim=275 25 250 25,clip,width=.99\textwidth,frame]{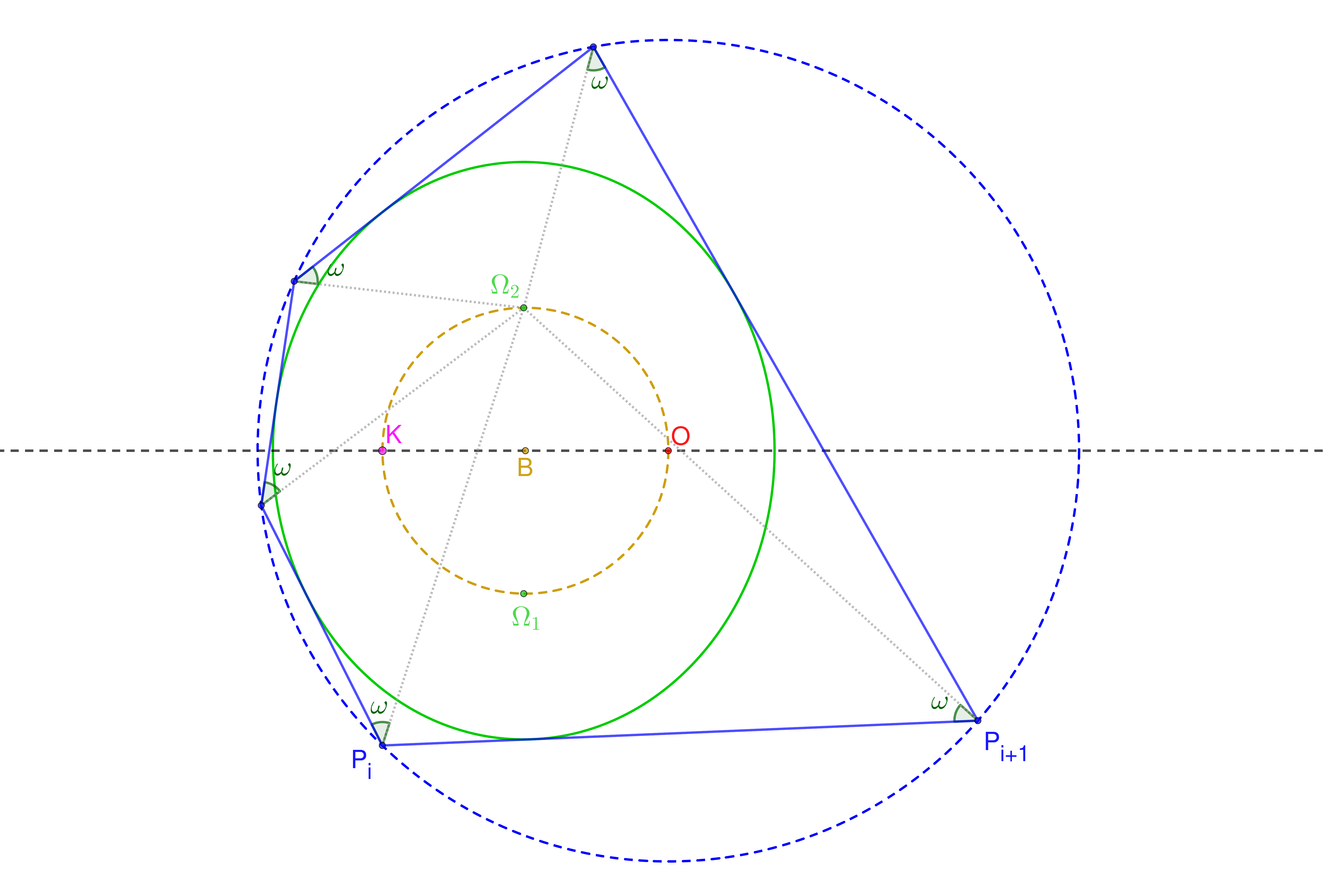}
\end{subfigure}
\caption{\textbf{Left:} The Poncelet harmonic family (blue) is inscribed in a circle centered at $O$ and contains a point $K$ (symmedian) whose distance to the sides is proportional to the sidelengths. The caustic (green) is known as the Brocard inellipse whose foci are the {\em Brocard points} $\Omega_1$ and $\Omega_2$.
\textbf{Right:} said Brocard points are where sides $P_i P_{i+1}$ rotated an angle $\omega$ about $P_i$ (resp. $-\omega$ about $P_{i+1}$) concur. $\omega$ is known as the {\em Brocard angle}. Also shown is the Brocard circle (dashed brown) with diameter $KO$.}
\label{fig:basic}
\end{figure}

Referring to \cref{fig:basic}(left), a polygon $\P$ is harmonic if inscribed in a circle $\C$ and containing an interior point $K$ (known as the {\em symmedian point}) whose distance to each sideline is a fixed proportion of the sidelength.

$\P$ circumscribes a special conic known as the {\em Brocard inellipse}, so named since its foci $\Omega_1,\Omega_2$ are the {\em Brocard points} of $\P$. These are points of concurrence of rotations of each side by a fixed angle known as the {\em Brocard angle} $\omega$, see \cref{fig:basic}(right).

Since $\P$ is interscribed between two real conics, Poncelet's closure theorem applies and a 1d family of such polygons will exist \cite{dragovic11,sergei91}. Amazingly, over the Poncelet family, $\Omega_1,\Omega_2$ (and many other associated objects) remain stationary and $\omega$ remains constant.
A review of harmonic polygons appears in \cref{app:review}.

\subsection*{Main Results} Using a simulation-based approach (mostly with Mathematica \cite{mathematica_v10}), we detected the following phenomena manifested by harmonic polygons which, to the best of our knowledge, had not been yet described.

\subsection{New conservations} the following conservations are proved in \cref{sec:conserved}:
\begin{itemize}
\item The sum of inverse squared sidelengths.
\item The sum of inverse squared radii of {\em Apollonius' circles} which are generalizations of same-named circles in triangles \cite{mw};
\item The sum of powers of internal angle cotangents, as well as all elementary symmetrical functions thereof (except for one).
\end{itemize}

In \cref{tab:conserve} the above conservations are compared side-by-side with others manifested by other Poncelet families studied in  \cite{akopyan2020-invariants,bialy2020-invariants,caliz2020-area-product,galkin2021-affine,reznik2021-fifty-invariants,roitman2021-bicentric}.

\subsection{Relationship to the Poncelet Homothetic family} In \cref{sec:homothetics} we show that a certain polar image of the harmonic family is the so-called ``homothetic family'', i.e., a Poncelet family of $N$-gons interscribed between two homothetic ellipses.

Therefore, and as shown in \cref{fig:homot-polars}, two ``lateral'' harmonic families can be obtained from the homothetic one: these are polar images of the latter with respect to the left (resp. right) focus of their inner ellipse. We show that the harmonic mean of their areas is invariant for $N=3,5$ and conjecture this will hold for all $N$.

\begin{figure}
    \centering
    \includegraphics[width=\textwidth]{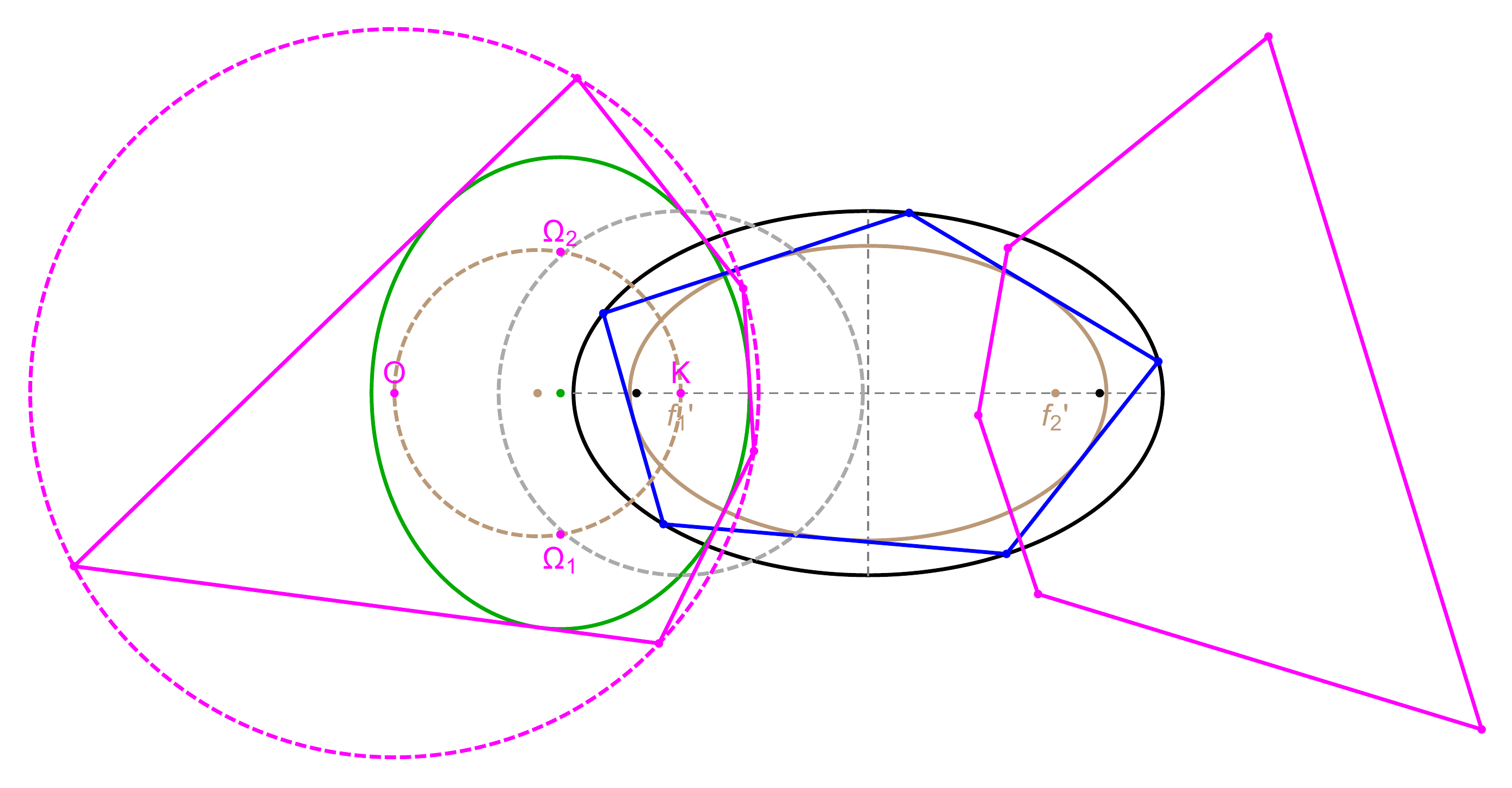}
    \caption{The Poncelet homothetic family (blue), interscribed between two homothetic ellipses (black and brown) is the polar image of a harmonic family (left, magenta) with respect to its symmedian point $K$ which coincides with the (left) internal focus $f_1'$ of the homothetic family. The polar image of the latter with respect to its right internal focus $f_2'$ is a mirrored, out-of-phase image of the left one. If $N$ is odd, the harmonic mean of the areas of the two shown lateral harmonic polygons (magenta) is experimentally invariant (\cref{conj:inv-area-sum}).}
    \label{fig:homot-polars}
\end{figure}

\subsection{Isocurves of Brocard angle}

Based on experimental evidence, in \cref{sec:isobrocs} we conjecture that a result by Johnson 
\cite{johnson17-schoute} for $N=3$ remains valid for all $N$. Namely, that the isocurves of inversion centers for constant Brocard angle are circles in a special pencil known as the Schoute pencil, containing the circumcircle and Brocard circle of the family (defined in \cref{app:harm}).


\subsection*{Related Work}

Original results concerning harmonic polygons can be found in \cite{casey1888,simmons1886,tarry1887}. In \cite{sharp45}, the harmonic family is defined as a generic projection of a regular polygon, but in this case metric properties are lost. In \cite[Section 4.6, p. 129]{akopyan12}, the harmonic porism is studied in the Klein model of hyperbolic plane (where $K$ becomes the center of the ideal circle). A recent study of harmonic quadrilaterals is  \cite{pamfilos2014-harmonics}.

The more elementary Brocard porism of triangles is studied in  \cite{bradley2011-brocard,bradley2007-brocard,johnson1960,simmons1888-recent}. In \cite{garcia2020-brocard-loci} loci of triangle centers over the Brocard porism is studied while \cite{reznik2022-brocard-converging} a converging sequence of such porisms is analyzed.

Quantities conserved by Poncelet $N$-gons have appeared in recent studies, including: (i) the confocal pair \cite{garcia2020-new-properties,reznik2020-eighty,reznik2021-fifty-invariants}, (ii) the homothetic pair \cite{galkin2021-affine}, (iii) the bicentric family \cite{roitman2021-bicentric}, and (iv) other special families \cite{bellio2021-parabola-inscribed,garcia2022-steiner-soddy}.

In \cite{bernhart59} a certain polynomial is proposed which suggests that a collection of expressions are invariant over the harmonic family (this is related to our results in \cref{sec:conserved}).
    


\subsection*{Article organization} In the next section we review the basics of the harmonic polygon family. Two new conserved quantities are proved in \cref{sec:conserved}; conservations based on the sum of powers of cotangents are proved in \cref{sec:cotangents}; the relationship between the harmonic family and Poncelet homothetics is derived in \cref{sec:homothetics}. A conjecture regarding isocurves of constant  Brocard angle appears in \cref{sec:isobrocs}. Videos of some experiments appear in \cref{sec:videos}.

In \cref{app:review} we review the basic construction and geometry of harmonic polygons. To facilitate further exploration, \cref{app:harm} provides explicit formulas for vertices and objects associated with the harmonic family.

\section{Two new conservations}
\label{sec:conserved}










In this section we will prove that some geometrical quantities are invariant for elements of the Ponceletian family of harmonic polygons. In the discussion that follows, we will identify the elements of $\mathbb{R}^2$ with the complex numbers. We will use the construction for a family of harmonic polygons $\P$  described in  \cite[Sec. VI, Prop. 2, p. 207]{casey1888} and shown in \cref{fig:pedro}(left)\footnote{This is identical to the one on \cref{fig:harm-proj}, where $d=S'$.}. Let $\alpha=\pi/N$.

\begin{itemize}
    \item Let $C$ be the unit circle centered at the origin, $d \in \mathbb{R}$ such that $|d|<1$ and $ \{ z_{k}=e^{i\left(2\alpha k+t \right)} \}$, $k=1,...,N$, $t\in \mathbb{R}$. For each $t\in \mathbb{R}$, the points $z_{k}$ are the vertices of a regular N-gon $R$ inscribed in $C$. The one-dimensional family of such regular polygons will be denoted by $\mathcal{R}$.

\item Consider the line through $d$ and $z_{k}$ and let $w_{k}$ be the other intersection of this line with $C$. For each $d$, the set of such points, in the natural order, are the vertices of a harmonic polygon $P$.
\end{itemize}

\begin{figure}
\begin{subfigure}[c]{0.48\textwidth}
    \includegraphics[trim=275 30 450 25,clip,width=\linewidth]{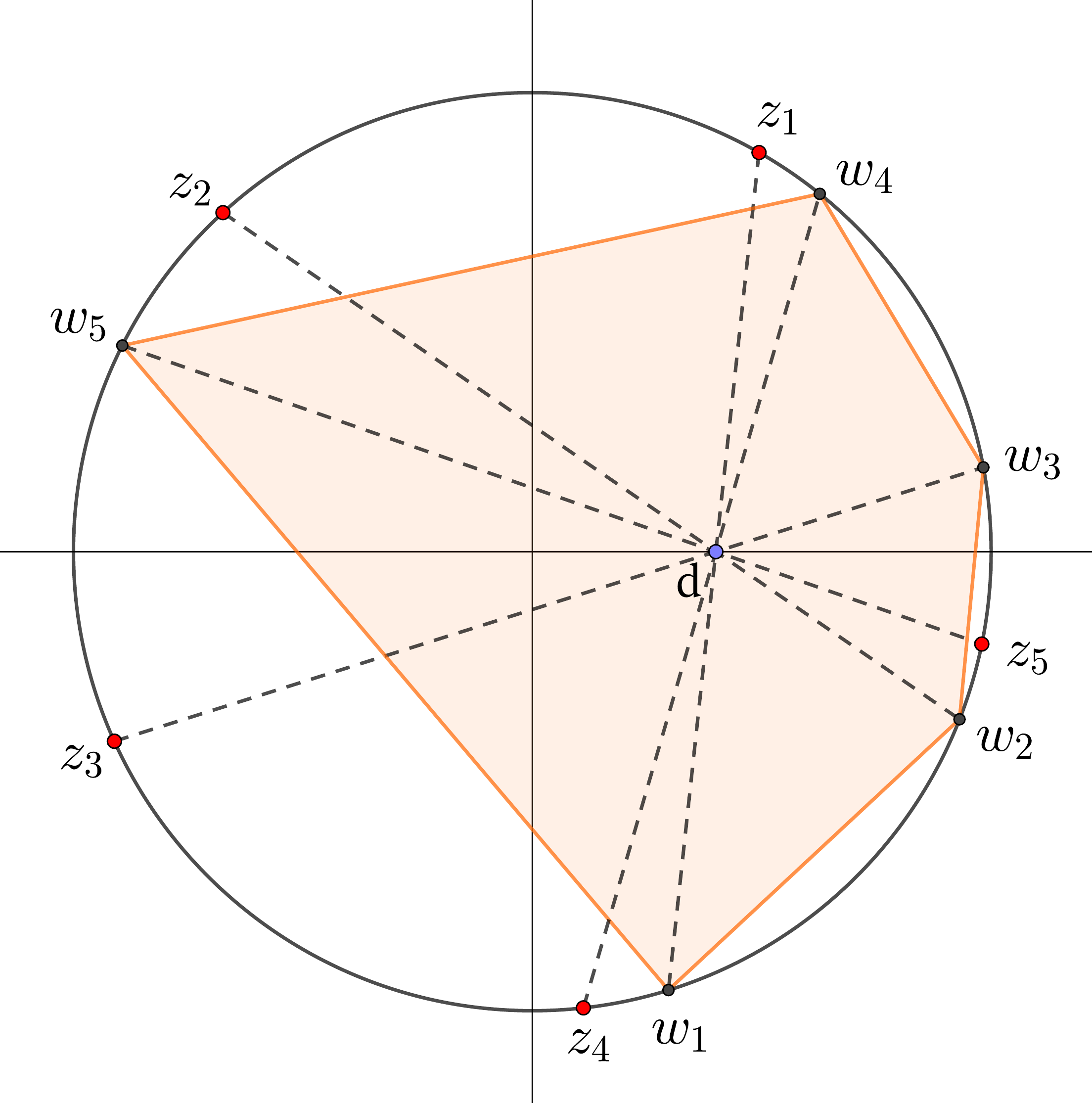}
\end{subfigure}
\begin{subfigure}[c]{0.51\textwidth}
    \includegraphics[trim=275 25 400 25,clip,width=\linewidth]{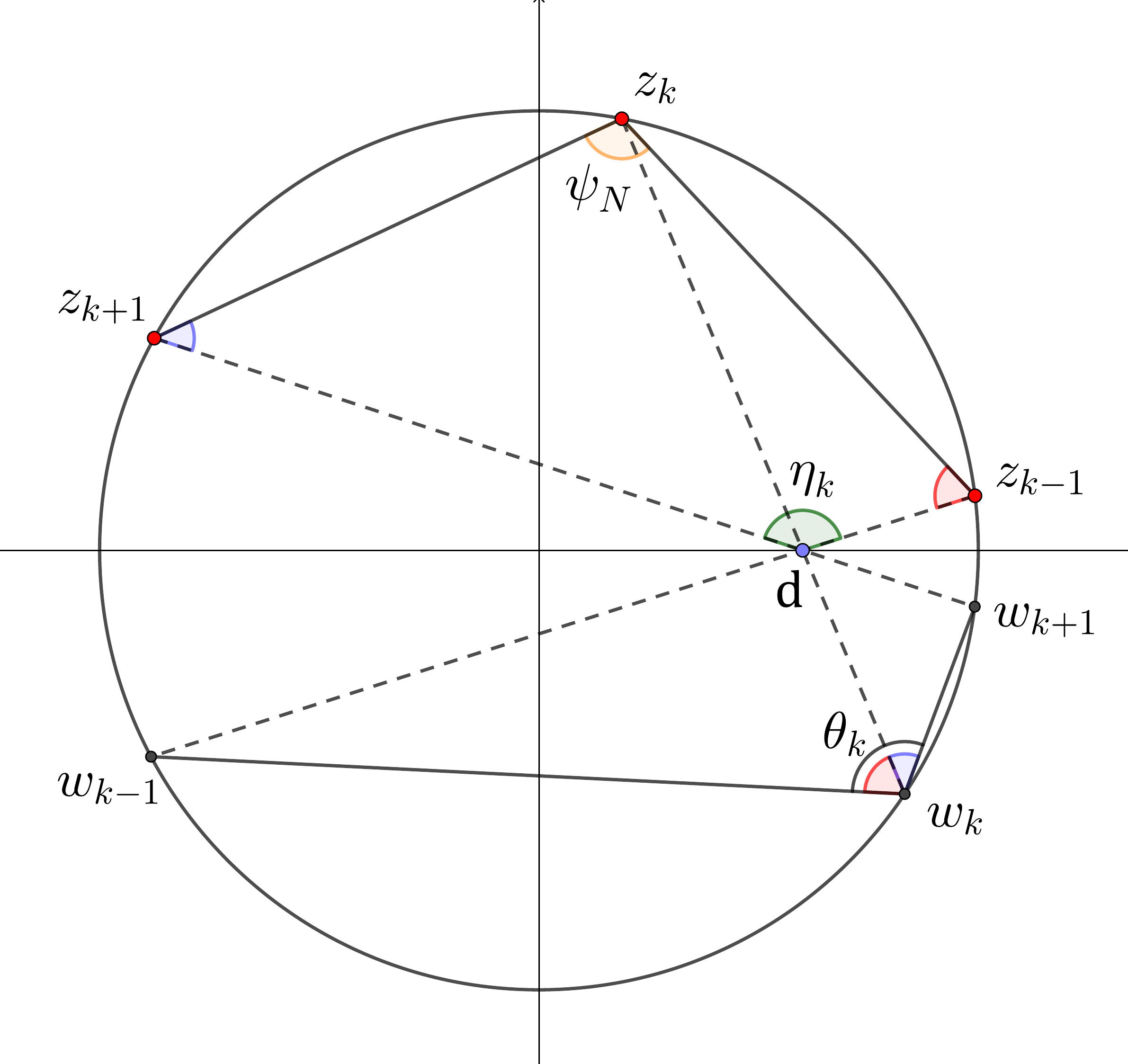}
\end{subfigure}
\caption{\textbf{Left:} The construction for a harmonic polygon used in the proofs in \cref{sec:conserved}. Note that this is equivalent to the construction in  \cref{fig:harm-proj} where the point $d$ above corresponds to $S'$. \textbf{Right:} Angle chasing used in the proof of \cref{lem:chasing}.}
\label{fig:pedro}
\end{figure}

\subsection{Inverse squared sidelengths}

Let $s_i$ denote the $i$-th sidelength of a harmonic polygon, $i=1,\ldots,N$.

\begin{proposition}
Over $\mathcal{P}$ the sum of inverse squared sidelenghts is invariant and given by:
\[\sum_{k=1}^N \frac{1}{s_k^2}= {N} \frac{ d^2 \cos^2\alpha+(d^{4}+1)/4 }{ (1-d^2)^2 \sin^2\alpha}
\]
\end{proposition}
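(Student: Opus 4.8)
The plan is to obtain an explicit Möbius formula for each vertex $w_k$ of the harmonic polygon in terms of $z_k$ and $d$, reduce the sidelengths to a product of two elementary factors, and then evaluate the resulting sum with roots‑of‑unity identities. First I would derive a closed form for $w_k$. Since $w_k\in C$ and $d,z_k,w_k$ are collinear with $d$ real, the collinearity condition $\frac{w_k-d}{z_k-d}\in\mathbb{R}$, combined with $\overline{z_k}=1/z_k$ and $\overline{w_k}=1/w_k$, yields after clearing denominators and factoring out $(w_k-z_k)$ the relation $w_k+z_k=d(1+z_k w_k)$. Solving gives
\[ w_k=\frac{d-z_k}{1-d\,z_k}, \]
a Blaschke‑type automorphism of $C$ that preserves the cyclic order of the vertices, so the sides of $P$ are the segments $w_{k+1}w_k$.

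Next I would compute $w_{k+1}-w_k$. Placing the two fractions over the common denominator $(1-dz_{k+1})(1-dz_k)$, the numerator collapses (the $d$‑ and $d^2 z_jz_{j+1}$‑terms cancel) to $(1-d^2)(z_k-z_{k+1})$, giving
\[ s_k=|w_{k+1}-w_k|=\frac{(1-d^2)\,|z_{k+1}-z_k|}{|1-dz_k|\,|1-dz_{k+1}|}. \]
Because $z_k,z_{k+1}$ are adjacent vertices of a regular $N$‑gon, $|z_{k+1}-z_k|=2\sin\alpha$ is independent of $k$; and with $\theta_k=2\alpha k+t$ one has $|1-dz_k|^2=1-2d\cos\theta_k+d^2=:a_k$. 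Hence all $k$‑dependence sits in the $a_k$, and
\[ \frac{1}{s_k^2}=\frac{a_k\,a_{k+1}}{4(1-d^2)^2\sin^2\alpha}. \]

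It then remains to evaluate $\sum_{k=1}^N a_k a_{k+1}$, which is the crux. Writing $a_k=q-p\cos\theta_k$ with $q=1+d^2$, $p=2d$ and expanding, the linear term carries $\sum_k\cos\theta_k=0$ (a full cycle of $N$ equally spaced angles), while the quadratic term, via $\cos\theta_k\cos\theta_{k+1}=\tfrac12[\cos 2\alpha+\cos(\theta_k+\theta_{k+1})]$, retains only its constant part, since $\theta_k+\theta_{k+1}$ advances by $4\alpha=4\pi/N$ and $\sum_k e^{4\pi i k/N}=0$ for $N\ge 3$. This gives $\sum_k a_ka_{k+1}=N\bigl(q^2+\tfrac{p^2}{2}\cos 2\alpha\bigr)$. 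Substituting, dividing by $4(1-d^2)^2\sin^2\alpha$, and using $\cos 2\alpha=2\cos^2\alpha-1$ to rewrite $1+2d^2+d^4+2d^2\cos 2\alpha=1+d^4+4d^2\cos^2\alpha$ reproduces the stated closed form.

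I expect the genuinely delicate points to be, first, the algebraic cancellation in $w_{k+1}-w_k$ that makes the numerator $k$‑independent (this is what makes the sidelengths tractable), and second, the bookkeeping in the trigonometric sum — in particular recognizing that every oscillatory frequency that appears is a nonzero multiple of $2\pi/N$ and therefore sums to zero, so that only the two constant contributions $Nq^2$ and $\tfrac{N}{2}p^2\cos 2\alpha$ survive. Everything else is routine symbol manipulation.
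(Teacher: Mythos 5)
Your proposal is correct and follows essentially the same route as the paper's proof: the Möbius formula $w_k=\frac{d-z_k}{1-dz_k}$ is exactly the paper's $w_k=\frac{d\bar z_k-1}{\bar z_k-d}$ rewritten using $\bar z_k=1/z_k$, and the subsequent steps (the $(1-d^2)$ cancellation in $w_{k+1}-w_k$, the reduction of $1/s_k^2$ to a product of the factors $1+d^2-2d\cos\theta_k$, the product-to-sum identity, and the vanishing of the oscillatory sums over a full period) coincide with the paper's argument. The only cosmetic difference is that you derive the vertex formula from the collinearity condition rather than quoting it, and you expand $|1-dz_k|^2$ algebraically where the paper invokes the law of cosines.
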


\begin{proof}
By the geometric condition that defines a vertex $w_{k}$ of $P$ in terms of a vertex $z_{k}$ of $R$, we have:
\[w_{k}=\frac{d\bar{z}_{k}-1}{\bar{z}_{k}-d}\]
Using this expression for $w_{k}$ and the corresponding one for $w_{k-1}$, a simple computation yields:
\[w_{k}-w_{k-1}=\frac{(\bar{z}_{k}-\bar{z}_{k-1})(1-d^2)}{(\bar{z}_{k}-d)(\bar{z}_{k-1}-d)}\]

From the fact that $\left|\bar{z}_{k}-\bar{z}_{k-1}\right|=2\sin \alpha$, since it is the length of a side of $R$, we may conclude that:
\[s_{k}^{-2}=\left|w_{k}-w_{k-1}\right|^{-2}=\frac{\left| z_{k}-d \right|^{2}\left| z_{k-1}-d \right|^{2}}{4(1-d^2)^2\sin^2\alpha}\]

By the law of cosines, it follows that:

\begin{align*}
\left| z_{k}-d \right|^{2}&=1+d^2-2d\cos(\nu_{k}), \\
\left| z_{k-1}-d \right|^{2}&=1+d^2-2 d\cos(\nu_{k-1})
\end{align*}
where $\nu_{k}=2\alpha k+t$ and  $\nu_{k-1}=2\alpha (k-1)+t$. So:
{\small
\[
\left| z_{k}-d \right|^{2}\left| z_{k-1}-d \right|^{2}=(1+d^2)^2-2d(1+d^2)(\cos(\nu_{k})+\cos(\nu_{k-1}))+4d^2\cos(\nu_{k})\cos(\nu_{k-1})\]
}

When we sum over $k$, it is clear that the sum of $\cos(\nu_{k})$ and $\cos(\nu_{k-1})$ are both zero, so that the only non-trivial sum to evaluate is:
\[
\sum_{k=1}^N \cos(\nu_{k})\cos(\nu_{k-1})
\]

Since:
\[
\cos(\nu_{k})\cos(\nu_{k-1})=\frac{1}{2}\left(\cos(\nu_{k}+\nu_{k-1})+\cos(\nu_{k}-\nu_{k-1})\right)
\]

We may write the above sum as: 
\[
\frac{1}{2}\sum_{k=1}^N{\cos\left(2\alpha (2k-1)+2t\right)}+\frac{N}{2}\cos\left(2\alpha \right)
\]
It is well known that the above sum is equal to zero, see for example  \cite{knapp2009}. A short computation then yields the desired expression for $\sum_{k=1}^N (1/{s_k^2})$.
\end{proof}	

\subsection{Apollonius' radii}

\begin{definition}[Apollonius' Circles]
Given a triangle, one of the three circles passing through a vertex and both isodynamic points $S$ and $S'$ \cite[Isodynamic Points]{mw}.
\end{definition}

Referring to \cref{fig:apoll}, for each vertex $w_{k}$ in a harmonic polygon, consider the ``generalized'' Apollonius circle $C_{k}$ passing through the points $w_{k}$, $d$ and $d^{-1}$ (these are the limiting points of the generalized Schoute pencil \cite{johnson17-schoute}). Let $r_{k}$ be the radius of $C_{k}$. We will prove that:

\begin{figure}
    \centering
    \includegraphics[width=\textwidth]{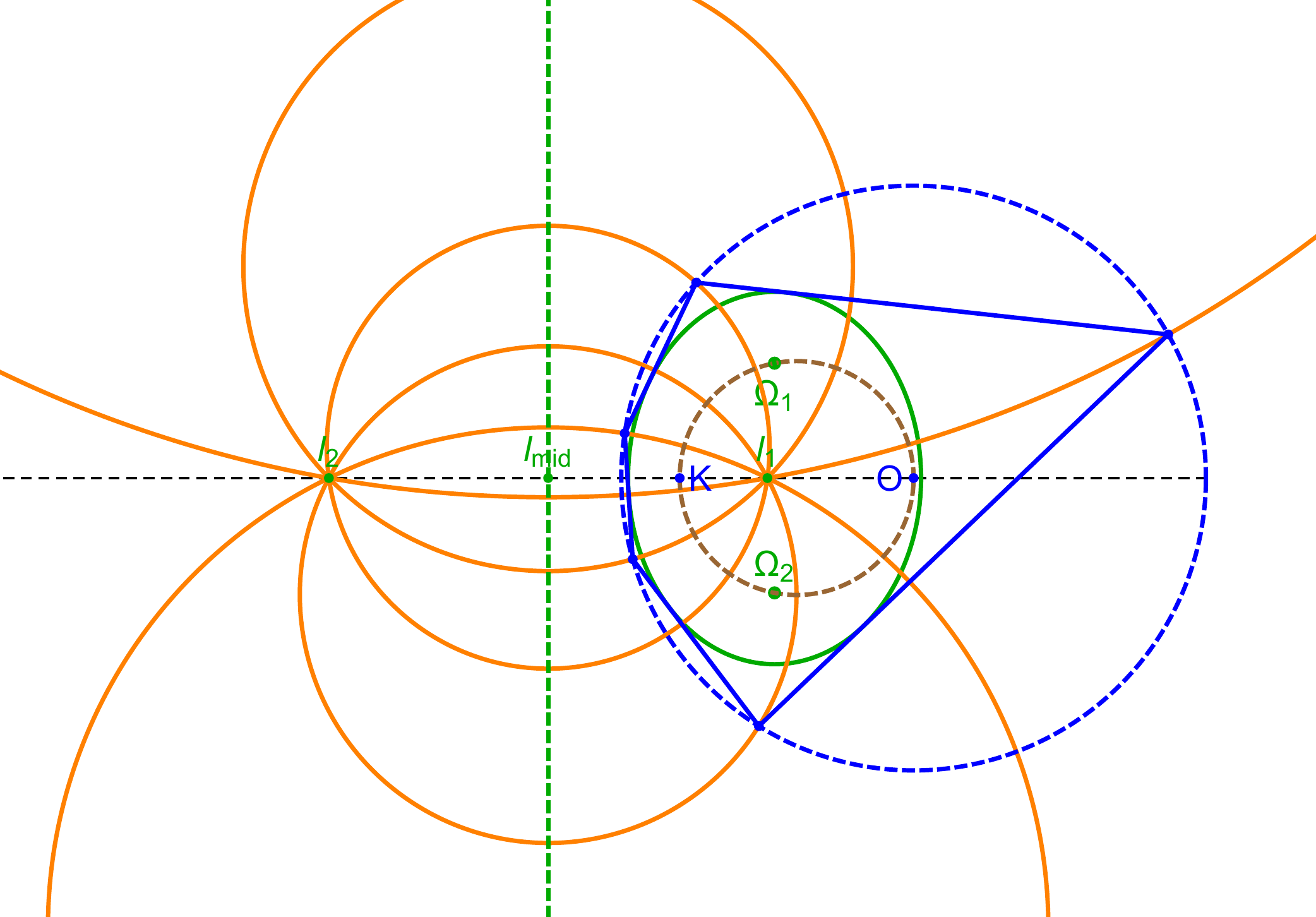}
    \caption{A harmonic polygon $\P$ (blue) and its Apollonius' circles (orange), each of which passes through one vertex of $\P$ and the two limiting points $\ell_1,\ell_2$ of the Schoute pencil. Also shown is the Lemoine axis (dashed green) of the pencil.}
    \label{fig:apoll}
\end{figure}

\begin{proposition}
Over $\mathcal{P}$, the sum of inverse squared Apollonius' radii is invariant and given by:
\[
\sum_{k=1}^N \frac{1}{r_k^2}=\frac{2N}{(d^{-1}-d)^2} 
\]
\end{proposition}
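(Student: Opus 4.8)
The plan is to realize each $r_k$ as the circumradius of the triangle with vertices $w_k$, $d$, and $d^{-1}$, and then to reduce $\sum_k r_k^{-2}$ to the very same elementary trigonometric sum that drove the previous proposition. Since $d$ and $d^{-1}$ are both real, the chord joining them lies on the real axis and has length $|d^{-1}-d|$, so the triangle has base $|d^{-1}-d|$ and height $|\mathrm{Im}\,w_k|$; its area is therefore $\tfrac12|d^{-1}-d|\,|\mathrm{Im}\,w_k|$. Feeding this into the standard formula $r_k=(\text{product of side lengths})/(4\cdot\text{area})$, the factor $|d^{-1}-d|$ cancels and I obtain
\[
\frac{1}{r_k^2}=\frac{4\,(\mathrm{Im}\,w_k)^2}{|w_k-d|^2\,|w_k-d^{-1}|^2}.
\]

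Next I would make everything explicit using the formula $w_k=(d\bar z_k-1)/(\bar z_k-d)$ from the previous proof, which with $z_k=e^{i\nu_k}$ rewrites as $w_k=(d-e^{i\nu_k})/(1-d\,e^{i\nu_k})$. A short computation gives $w_k-d=(d^2-1)e^{i\nu_k}/(1-d\,e^{i\nu_k})$, hence $|w_k-d|^2=(d^2-1)^2/(1-2d\cos\nu_k+d^2)$ and $\mathrm{Im}\,w_k=(d^2-1)\sin\nu_k/(1-2d\cos\nu_k+d^2)$. The decisive simplification is that, because $|w_k|=1$, one has $|w_k-d^{-1}|^2=d^{-2}|w_k-d|^2$ (equivalently $1-2d^{-1}\cos\nu_k+d^{-2}=d^{-2}(1-2d\cos\nu_k+d^2)$). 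Substituting these three identities into the displayed expression, the denominators $(1-2d\cos\nu_k+d^2)$ and the excess powers of $(d^2-1)$ collapse, leaving the remarkably clean
\[
\frac{1}{r_k^2}=\frac{4 d^2 \sin^2\nu_k}{(d^2-1)^2}.
\]

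Finally I would sum over $k$. Writing $\sin^2\nu_k=\tfrac12(1-\cos 2\nu_k)$ and using $\nu_k=2\alpha k+t$ with $\alpha=\pi/N$, the sum $\sum_{k=1}^N\cos(4\alpha k+2t)$ is a geometric sum of $N$-th roots of unity and vanishes for $N\ge 3$ (the same vanishing-sum fact invoked in the previous proposition), so $\sum_{k=1}^N\sin^2\nu_k=N/2$. This yields $\sum_k r_k^{-2}=2Nd^2/(d^2-1)^2$, which equals $2N/(d^{-1}-d)^2$ upon noting $(d^{-1}-d)^2=(d^2-1)^2/d^2$.

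The routine algebra is not the real difficulty; the one step that drives the whole result is the inverse-point identity $|w_k-d^{-1}|^2=d^{-2}|w_k-d|^2$, valid precisely because $w_k$ lies on the unit circle while $d$ and $d^{-1}$ are mutually inverse with respect to it. Once this is spotted, the per-vertex contribution collapses to a single $\sin^2\nu_k$ term and the claimed invariance follows immediately from the standard root-of-unity sum.
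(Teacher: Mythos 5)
Your proof is correct, and it reaches the paper's key intermediate identity $r_k^{-2}=4\sin^2\nu_k/(d^{-1}-d)^2$ by a genuinely different route. The paper works synthetically: it writes $1/r_k=2\sin\gamma_k/(d^{-1}-d)$ via the law of sines in the circle through $w_k,d,d^{-1}$, and then identifies the inscribed angle $\gamma_k=\angle d\,w_k\,d^{-1}$ with $\nu_k=2\alpha k+t$ by showing (via the complex cross ratio) that $0,w_k,d^{-1},z_k$ are concyclic. You instead bypass the angle entirely: you use the circumradius--area formula for the triangle $(w_k,d,d^{-1})$, compute $|w_k-d|^2$ and $\operatorname{Im}w_k$ explicitly from $w_k=(d-e^{i\nu_k})/(1-de^{i\nu_k})$, and collapse $|w_k-d^{-1}|^2$ via the inverse-point identity $|w_k-d^{-1}|^2=d^{-2}|w_k-d|^2$, which holds because $|w_k|=1$ and $d$ is real. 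Your identity is of course equivalent to the concyclicity statement (both encode that $d$ and $d^{-1}$ are inverse with respect to the circumcircle), but your version is purely computational and self-contained, whereas the paper's version is shorter once the concyclicity lemma is granted and makes the geometric reason for the clean answer more visible. The final step---$\sum_k\sin^2\nu_k=N/2$ from the vanishing of $\sum_k\cos(4\alpha k+2t)$ for $N\ge 3$---is identical in both arguments. One cosmetic point: when $\operatorname{Im}w_k=0$ the triangle degenerates and $C_k$ is a line, but your formula correctly returns $r_k^{-2}=0$ there, so nothing breaks.
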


\begin{proof}
Let $\gamma_{k}=\angle d w_{k}d^{-1}$, then, by the law of sines, we have
\[
\frac{2\sin{\gamma_k}}{d^{-1}-d}=\frac{1}{r_k}
\]

A straightforward computation, using for instance the complex cross ratio, shows that the points $0,w_{k},d^{-1}$ and $z_{k}$ are concyclic, and from this we conclude that $\gamma_{k}=2\alpha k+t$ $mod$ $2\pi$.  
Therefore:
\[
\sum_{k=1}^{N}\frac{1}{r_{k}^2}=\frac{4}{(d^{-1}-d)^2}\sum_{k=1}^{N}\sin^2{\gamma_k}
\]  
Using the identity $\sin^2{x}=\left(1-\cos(2x)\right)/2$ and the fact that:
\[
\sum_{k=1}^{N}\cos{2\gamma_k}=0
\]
we conclude that
\[
\sum_{k=1}^{N}\sin^2{\gamma_k}=\frac{N}{2}
\]
and therefore:
\[
\sum_{k=1}^{N}\frac{1}{r_{k}^2}=\frac{2N}{(d^{-1}-d)^2}
\]
Which yields the claim.
\end{proof}

\section{Conserved sums of cotangents}
\label{sec:cotangents}

The following lemma contains a useful expression for the cotangent of an internal angle of a harmonic polygon.  Henceforth, let $\rho=(1+d^2)/(d^2-1)$.


\begin{lemma}
Let $P$ be a harmonic polygon and $\theta_{k}$ be the internal angle of $H$ at the vertex $w_{k}$, then:
\begin{equation}
\label{cotint}
\cot{\theta_k}=\frac{-2d\cos{\left(2\alpha k+t\right)}}{(d^2-1)\sin(2\alpha)}+\rho\cot{\left(2\alpha \right)}
\end{equation}
\label{lem:chasing}

\end{lemma}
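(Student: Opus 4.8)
The plan is to compute the internal angle directly by complex arithmetic, reusing the side-difference formula already derived in the proof of the first proposition rather than chasing inscribed angles. Writing $u=w_{k-1}-w_k$ and $v=w_{k+1}-w_k$ for the two edge vectors emanating from $w_k$, the (unsigned) internal angle $\theta_k\in(0,\pi)$ satisfies $\cos\theta_k=\operatorname{Re}(v/u)/|v/u|$ and $\sin\theta_k=|\operatorname{Im}(v/u)|/|v/u|$, hence
\[
\cot\theta_k=\frac{\operatorname{Re}(v/u)}{|\operatorname{Im}(v/u)|}.
\]
Working with the quotient $v/u$ is what makes this tractable: substituting $w_k-w_{k-1}=\dfrac{(\bar z_k-\bar z_{k-1})(1-d^2)}{(\bar z_k-d)(\bar z_{k-1}-d)}$ and its index-shifted analogue, the factor $(1-d^2)$ and the shared factor $(\bar z_k-d)$ cancel, leaving
\[
\frac{v}{u}=-\frac{(\bar z_{k+1}-\bar z_k)(\bar z_{k-1}-d)}{(\bar z_{k+1}-d)(\bar z_k-\bar z_{k-1})}.
\]

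First I would dispose of the purely angular factor. Since $z_k=e^{i\nu_k}$ with $\nu_k=2\alpha k+t$ in arithmetic progression, the consecutive differences $\bar z_{k+1}-\bar z_k$ and $\bar z_k-\bar z_{k-1}$ differ only by a rotation, and factoring out half-angle exponentials gives the clean identity $(\bar z_{k+1}-\bar z_k)/(\bar z_k-\bar z_{k-1})=e^{-2i\alpha}$. This collapses the quotient to $v/u=-e^{-2i\alpha}(\bar z_{k-1}-d)/(\bar z_{k+1}-d)$. Using $\bar z_{k\pm1}=\bar z_k\,e^{\mp 2i\alpha}$ and $|z_k|=1$, I would clear the denominator by multiplying by its conjugate; the numerator $-e^{-2i\alpha}(\bar z_{k-1}-d)\overline{(\bar z_{k+1}-d)}$ expands to $-e^{-2i\alpha}\bigl(e^{4i\alpha}-2d\cos\nu_k\,e^{2i\alpha}+d^2\bigr)=-e^{2i\alpha}+2d\cos\nu_k-d^2e^{-2i\alpha}$, whose real and imaginary parts are $\operatorname{Re}=2d\cos\nu_k-(1+d^2)\cos 2\alpha$ and $\operatorname{Im}=(d^2-1)\sin 2\alpha$ (the positive real factor $|\bar z_{k+1}-d|^{-2}$ is immaterial for the ratio).

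The only subtle point is fixing the sign, where I expect the main obstacle to lie. Because $|d|<1$ and $\sin 2\alpha>0$ (as $N\ge 3$), the imaginary part $(d^2-1)\sin 2\alpha$ is \emph{negative}, so $|\operatorname{Im}(v/u)|=(1-d^2)\sin 2\alpha$; this is precisely the orientation bookkeeping forced by the fact that the defining map $z\mapsto(d\bar z-1)/(\bar z-d)$ is conjugate-linear and hence orientation-reversing. Dividing the real part by $|\operatorname{Im}(v/u)|$ and writing $(1+d^2)/(d^2-1)=\rho$ then yields
\[
\cot\theta_k=\frac{2d\cos\nu_k-(1+d^2)\cos 2\alpha}{(1-d^2)\sin 2\alpha}=\frac{-2d\cos(2\alpha k+t)}{(d^2-1)\sin 2\alpha}+\rho\cot 2\alpha,
\]
which is exactly \eqref{cotint}. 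Everything past the identity $(\bar z_{k+1}-\bar z_k)/(\bar z_k-\bar z_{k-1})=e^{-2i\alpha}$ is a routine trigonometric collapse.
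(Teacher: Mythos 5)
Your proof is correct, but it takes a genuinely different route from the paper's. The paper never touches the harmonic vertices directly in this lemma: it chases angles through the auxiliary point $d$, using the relation $\theta_k+\eta_k+\tfrac{(N-2)\pi}{N}=2\pi$ with $\eta_k=\angle z_{k+1}\,d\,z_{k-1}$, then computes $\cot\eta_k$ as a ratio of real inner products of $z_{k\pm1}-d$, and finishes with the cotangent addition formula. You instead work entirely at the vertex $w_k$: you reuse the side-difference formula $w_k-w_{k-1}=\frac{(\bar z_k-\bar z_{k-1})(1-d^2)}{(\bar z_k-d)(\bar z_{k-1}-d)}$ already derived for the first proposition, form the edge-vector quotient $v/u$, and read $\cot\theta_k$ off its real and imaginary parts. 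Your computations check out: the ratio of consecutive chord differences is indeed $e^{-2i\alpha}$, the expansion of $-e^{-2i\alpha}(\bar z_{k-1}-d)(z_{k+1}-d)$ gives $\operatorname{Re}=2d\cos\nu_k-(1+d^2)\cos 2\alpha$ and $\operatorname{Im}=(d^2-1)\sin 2\alpha$, and your sign analysis ($\operatorname{Im}<0$ because $|d|<1$ and $\sin 2\alpha>0$) lands on the stated formula. What your approach buys is economy: it avoids introducing $\eta_k$ and the cotangent-addition manipulation, and it makes the earlier side-length computation do double duty. What it quietly uses (as does the paper's angle chase) is that the internal angle is the angle at most $\pi$ between the two edge vectors, i.e.\ convexity of the harmonic polygon; this holds here because the $w_k$ lie on the unit circle in their natural cyclic order, but it is worth stating explicitly.
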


\begin{proof}


Referring to \cref{fig:pedro}(right), the internal angle $\psi_{k}$ of a regular $N$-gon at $z_{k}$ is fixed and given by $\psi_k=\psi_N=\frac{(N-2)\pi}{N}$;  $\eta_{k}$ is the angle $\angle{z_{k+1} d z_{k-1}}$, then, from elementary geometry, we have $\theta_{k}+\eta_{k}+\frac{(N-2)\pi}{N}=2\pi$ and therefore it follows that:
\[\cot{\theta_{k}}=-\cot{\left(\eta_{k}+\frac{(N-2)\pi}{N}\right)}=\frac{1+\cot{\eta_{k}}\cot{2\alpha }}{\cot{\eta_{k}}-\cot{2\alpha }} \]

We will first compute $\cot{\eta_{k}}$. Note that, if we denote by $\left\langle \, , \, \right\rangle$ the canonical inner product in $\mathbb{R}^2$, then:
\[ \cot{\eta_{k}}=\frac{\left\langle z_{k-1}-d ,z_{k+1}-d \right\rangle}{\left\langle i(z_{k-1}-d),z_{k+1}-d \right\rangle} \]

The numerator can be computed using complex multiplication as follows, first we write:
\[
\left\langle z_{k-1}-d ,z_{k+1}-d \right\rangle= \text{Re}\left[(z_{k-1}-d)(\bar{z}_{k+1}-d)\right]\]

\noindent Using the well-known trigonometric identity,
\[\cos{\varphi}+cos{\psi}=2\cos{\left(\frac{\varphi+\psi}{2}\right)}\cos{\left(\frac{\varphi-\psi}{2}\right)}\]

\noindent A straightforward computation yields:
\[\left\langle z_{k-1}-d ,z_{k+1}-d \right\rangle=-2d\cos{\left(2\alpha k+t\right)}\cos{\left(2\alpha \right)}+\cos(4\alpha)+d^2\]

\noindent Analogously, the denominator, which we will denote by $\Delta$, is given by:
\[\Delta=\left\langle i(z_{k-1}-d),z_{k+1}-d \right\rangle=-2d\cos{\left(2\alpha k+t\right)}\sin{\left(2\alpha \right)}+\sin{(4\alpha)}\]

With an explicit expression for $\cot{\eta_k}$, we can now compute  $\cot{\theta_k}$. To simplify the expressions, we will compute the numerator and denominator of $\cot{\theta_k}$ separately. Let's start with  the denominator $\mathcal{D}=\cot{\eta_{k}}-\cot{2\alpha }$:

\begin{align*}
\mathcal{D} = & \frac{1}{\Delta}\left(-2d\cos{(2\alpha k+t)}\cos(2\alpha)+\cos{(4\alpha )}+d^2 \right)-\\
    & \frac{\cot(2\alpha)}{\Delta}\left(\sin{(4\alpha )-2d\cos{(2\alpha k+t)}\sin(2\alpha)}\right) \\
  = &\frac{1}{\Delta}\left(\cos{(4\alpha )}-\cot(2\alpha)\sin{(4\alpha )}+d^2\right)\\
  = &\frac{1}{\Delta}\left(d^2-1\right) 
\end{align*}

Since the numerator $\mathcal{N}=1+\cot{(\eta_k)}\cot(2\alpha)$ can be computed in a similar way, we limit ourselves to write down the result:

\[
\mathcal{N}=\frac{1}{\Delta}\left( \frac{-2 d\cos{(2\alpha k+t)}}{\sin{(2\alpha )}}+\cot(2\alpha)(1+d^2)\right)\]

Thus, we have:
\[
\cot{\theta_k}=\frac{-2d\cos{(2\alpha k+t)}}{(d^2-1)\sin{(2\alpha )}}+\rho\cot(2\alpha)
\]
This concludes the proof.
\end{proof}

Using (\ref{cotint}), we may obtain explicit expressions for conserved quantities. As an example, we have the following proposition.

\begin{proposition}
Over $\mathcal{P}$, the sum of (i) cotangents and (ii) squared contangents of internal angles are invariant and given by:
\begin{align*}
\sum_{k=1}^N\cot\theta_k&=N\rho\cot(2\alpha)\\
\sum_{k=1}^N\cot^2\theta_k&=N \frac{  \rho^2(2+\cos(4\alpha))  - 1 }{ 1 - \cos(4\alpha)}
\end{align*}
\end{proposition}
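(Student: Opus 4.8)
The plan is to substitute the closed-form expression for $\cot\theta_k$ from \cref{lem:chasing} directly into each sum and reduce everything to elementary trigonometric sums in $k$. Writing $\cot\theta_k = A\cos(2\alpha k + t) + B$ with $A = -2d/\big[(d^2-1)\sin(2\alpha)\big]$ and $B = \rho\cot(2\alpha)$, the computation hinges on two vanishing sums: $\sum_{k=1}^N \cos(2\alpha k + t) = 0$ (already invoked in the proof of the inverse-squared-sidelength proposition) and $\sum_{k=1}^N \cos(4\alpha k + 2t) = 0$. The latter follows from the standard formula for a cosine sum in arithmetic progression, since the common difference is $4\alpha = 4\pi/N$, so that $\sin(N\cdot 2\alpha) = \sin(2\pi) = 0$ while $\sin(2\alpha) = \sin(2\pi/N) \neq 0$ for $N \geq 3$.

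For part (i), the sum is linear in $\cos(2\alpha k + t)$, so the oscillating contribution is annihilated by the first vanishing sum and only the constant term $B$ survives, repeated $N$ times. This yields $\sum_k \cot\theta_k = NB = N\rho\cot(2\alpha)$ immediately.

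For part (ii), I expand $\cot^2\theta_k = A^2\cos^2(2\alpha k + t) + 2AB\cos(2\alpha k + t) + B^2$. The cross term vanishes by the first sum; the squared cosine is handled via $\cos^2 x = (1+\cos 2x)/2$, whereupon the second vanishing sum gives $\sum_k \cos^2(2\alpha k + t) = N/2$. This leaves $\sum_k \cot^2\theta_k = N(A^2/2 + B^2)$, and it remains to identify this with the stated closed form. The key algebraic simplification is $\rho^2 - 1 = 4d^2/(d^2-1)^2$, which recasts $A^2/2 = 2d^2/\big[(d^2-1)^2\sin^2(2\alpha)\big]$ as $(\rho^2-1)/\big[2\sin^2(2\alpha)\big]$; combined with $B^2 = \rho^2\cos^2(2\alpha)/\sin^2(2\alpha)$ and the double-angle identities $1 - \cos(4\alpha) = 2\sin^2(2\alpha)$ and $2 + \cos(4\alpha) = 1 + 2\cos^2(2\alpha)$, this collapses to $\big[\rho^2(2+\cos 4\alpha) - 1\big]/(1-\cos 4\alpha)$.

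The work here is entirely routine, and no single step presents a genuine obstacle beyond bookkeeping. The one point deserving care is the vanishing of the second-harmonic sum $\sum_k \cos(4\alpha k + 2t)$, which requires $N \geq 3$; since harmonic polygons presuppose $N \geq 3$ this causes no difficulty. The invariance claim itself—independence from the family parameter $t$—is automatic once both oscillating sums are seen to vanish identically in $t$, leaving expressions in $d$ and $\alpha$ alone.
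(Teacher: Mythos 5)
Your proposal is correct and follows essentially the same route as the paper: substitute the expression for $\cot\theta_k$ from \cref{lem:chasing}, kill the oscillating terms via $\sum_k\cos(2\alpha k+t)=0$ and $\sum_k\cos^2(2\alpha k+t)=N/2$, and simplify. The only difference is that you spell out the final algebraic reduction (via $\rho^2-1=4d^2/(d^2-1)^2$ and the double-angle identities), which the paper leaves as ``a simple computation''; your details check out.
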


\begin{proof}
From the \cref{lem:chasing}, it follows that
\begin{align*}
\sum_{k=1}^N\cot\theta_k&=\sum_{k=1}^N\left[\frac{-2d\cos{(2\alpha k+t)}}{(d^2-1)\sin{(2\alpha )}}+\rho\cot(2\alpha)\right]=N\rho\cot(2\alpha)\\
\sum_{k=1}^N\cot^2\theta_k&= \frac{4d^2}{(d^2-1)^2\sin^2{(2\alpha )}}\sum_{k=1}^N\cos^2(2\alpha k+t)\\
&-\sum_{k=1}^N\frac{4d\rho\cot(2\alpha)\cos{(2\alpha k+t)}}{(d^2-1)\sin{(2\alpha )}}+N\rho^2\cot^2(2\alpha)
\end{align*}
\noindent Using the  following known identity \cite{knapp2009}:
\[
\sum_{k=1}^N\cos^2(2\alpha k+t)=\frac{N}{2},
\]
obtain:
\begin{align*}
\sum_{k=1}^N\cot^2\theta_k&=\frac{N}{(1-d^2)^2}\left[\frac{2d^2}{\sin^2{(2\alpha )}}+\cot^2(2\alpha)(1+d^2)^2\right]=\\
&=N\left[ \frac{2d^2}{(1-d^2)^2\sin^2(2\alpha)} + \rho^2 \cot^2(2\alpha)\right]
\end{align*}
A simple computation, using trigonometric identities, yields the desired expression for the above sum and concludes the proof.
\end{proof}

\subsection{Symmetric invariants}
To discuss a set of invariant quantities involving the elementary symmetric functions of the cotangents of the internal angles of harmonic polygons, we will use the following notation for such functions:

Let $X=(X_{1},X_{2},...,X_{N})$ and let $e_{k}(X)$ denote the elementary symmetric functions in the variables $X_{j}$ ($j=1,\ldots,N$) that is, $e_{0}(X)=1$, $e_{1}(X)=\sum_{j=1}^{N} X_{j}$, $e_{2}(X)=\sum_{1\leq j<i\leq N} X_{i}X_{j},\ldots,e_{N}(X)=X_{1}X_{2} \ldots X_{N}$.

Our next result is a generalization of the invariance of the sum of cotangents of the internal angles $\theta_i$ of harmonic polygons.  

\begin{theorem}
Let $P$ be a harmonic $N$ sided polygon, $\lambda=(\cot{\theta_1},\cot{\theta_2,...,\cot{\theta_N})}$, then, the polynomials $e_{1}(\lambda), e_{2}(\lambda),...,e_{N-1}(\lambda)$ are invariant, that is, they do not depend on $t$.
\label{thm:symmetric}
\end{theorem}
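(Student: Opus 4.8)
The plan is to reduce the statement about elementary symmetric functions to a statement about power sums, and then to exploit the equally-spaced structure of the angles $\nu_k = 2\alpha k + t$ coming from $\alpha = \pi/N$. By \cref{lem:chasing}, every coordinate of $\lambda$ has the uniform shape $\cot\theta_k = a\cos\nu_k + b$, where $a = -2d/((d^2-1)\sin(2\alpha))$ and $b = \rho\cot(2\alpha)$ are constants depending only on $d$ and $N$, hence independent of the family parameter $t$. Thus all the $t$-dependence of $\lambda$ is carried by the single harmonic $\cos\nu_k$, and the whole problem becomes: which symmetric functions of $(\cos\nu_1,\ldots,\cos\nu_N)$ are $t$-invariant.

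First I would invoke Newton's identities, which express $e_1,\ldots,e_{N-1}$ as polynomials in the power sums $p_1,\ldots,p_{N-1}$, where $p_m = \sum_{k=1}^N (\cot\theta_k)^m$. Consequently it suffices to prove that each $p_m$ with $1 \le m \le N-1$ is independent of $t$. Expanding $(a\cos\nu_k+b)^m$ by the binomial theorem and summing over $k$, I reduce $p_m$ to a constant-coefficient combination of the trigonometric sums $\sum_{k=1}^N \cos^j\nu_k$ with $0 \le j \le m \le N-1$.

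The key step is the evaluation of these sums. Writing $\cos^j\nu_k$ through its harmonic expansion, it becomes a linear combination of $\cos(\mu\nu_k)$ with integer frequencies $\mu$ satisfying $0 \le \mu \le j \le N-1$. Because $\nu_k = \tfrac{2\pi}{N}k + t$, a root-of-unity summation gives $\sum_{k=1}^N \cos(\mu\nu_k) = \mathrm{Re}\big( e^{i\mu t}\sum_{k=1}^N e^{2\pi i \mu k/N}\big)$, which vanishes whenever $N \nmid \mu$ and equals $N\cos(\mu t)$ when $N \mid \mu$. Since here $0 \le \mu \le N-1$, the only surviving term is the constant harmonic $\mu = 0$, whose contribution does not involve $t$. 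Hence every $\sum_k \cos^j\nu_k$ with $j \le N-1$ is a $t$-independent constant, so each $p_m$ ($m \le N-1$) is invariant, and by Newton's identities so are $e_1(\lambda),\ldots,e_{N-1}(\lambda)$.

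I expect the main (and essentially only) subtlety to be this frequency-cutoff observation, together with the decision to route through power sums rather than manipulate the $e_j$ directly; both the binomial expansion and the root-of-unity cancellation are then routine. The same bookkeeping also explains the exclusion of $e_N$: computing it via Newton's identities requires $p_N$, whose expansion contains the top harmonic $\cos(N\nu_k)$, and $\sum_k \cos(N\nu_k) = N\cos(Nt)$ genuinely depends on $t$, so $e_N(\lambda) = \prod_k \cot\theta_k$ is in general not invariant.
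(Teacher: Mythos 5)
Your proof is correct, and it reaches the conclusion by a genuinely different route than the paper. The paper works directly with the elementary symmetric functions $e_k(c)$ of the cosines $c_j=\cos(2\alpha j+t)$: it uses the product-to-sum identity to write $e_k(c)$ as a trigonometric polynomial in $t$ whose frequencies are at most $k$, observes that $e_k(c)$ is periodic in $t$ with period $2\alpha=2\pi/N$ (since shifting $t$ by $2\alpha$ merely permutes the variables of a symmetric function), and concludes by orthogonality that all harmonics with $1\le m\le k<N$ must vanish. You instead route through Newton's identities to reduce everything to the power sums $p_m$, and then kill the nonconstant terms by the root-of-unity evaluation $\sum_{k=1}^N\cos(\mu\nu_k)=0$ for $0<\mu<N$. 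The two arguments exploit the same frequency-cutoff phenomenon, but from dual sides: the paper cancels harmonics in the parameter $t$ via periodicity, while you cancel harmonics in the summation index $k$ via the geometric series. Your version is somewhat more self-contained computationally (no appeal to Fourier orthogonality, every sum is evaluated explicitly), and it has the bonus of directly establishing the invariance of $\sum_k\cot^m\theta_k$ for all $m\le N-1$, which the paper records separately in \cref{tab:cot-higher} as a consequence of \cref{thm:symmetric}; your closing remark on why $e_N$ (equivalently $p_N$) genuinely varies with $t$ likewise matches, and sharpens, the paper's exclusion of $e_N$. All the individual steps --- the affine form $\cot\theta_k=a\cos\nu_k+b$ from \cref{lem:chasing}, the validity of Newton's identities in characteristic zero, the power-reduction of $\cos^j$ into frequencies $\mu\le j$, and the vanishing of $\sum_k e^{2\pi i\mu k/N}$ for $N\nmid\mu$ --- check out.
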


\begin{proof}
By the \cref{lem:chasing},  $e_{k}(\lambda)$ is a linear combination (with constant coefficients) of the elementary symmetric functions of the variables  
$$c_{j}=\cos{(2\alpha j+t)},$$
for $j=0,...,k$. Therefore, it suffices to prove that $e_{1}(c), e_{2}(c),...,e_{N-1}(c)$, where 
$$c=(c_{1},c_{2},...,c_{N}),$$
are invariant. Since $e_{k}(c)$ is a sum of products of cosines, then, the trigonometric identity
\[
 \prod_{i=1}^{m}\cos{\theta_{i}}=\frac{1}{2^{m-1}}\sum_{p_{n}\in p}{\cos{(p_n)}}
\]
where $p$ is the set of $2^{m-1}$ numbers having the form $\theta_{1} \pm \theta_{2} \pm...\pm \theta_{m}$,  
allows one to express $e_{k}(c)$ as a linear combination of cosines. The general term of this combination has the form
\[
A_m\cos {(mt+\varphi_{m})}
\]
where $m$ varies from $0$ to $k$, and $A_m$ and $\varphi_{m}$ are constants. This general term can be rewritten as: 
\[
a_m\cos {(mt)}+b_m\sin {(mt)}
\]
Except for $m=0$, such terms are periodic functions with period $2\pi/{m}$.

But notice that $e_{k}(c)$ is a periodic function with period $2\alpha $, with $N>k$. Therefore, from the well known orthogonality of trigonometric functions, it follows that $a_{m}=b_{m}=0$ for all $m\neq 0$. In other words, $e_{k}(c)$ must be constant.
\end{proof}

\subsection{Higher cotangent powers}

As shown in \cref{tab:cot-higher}, the sum of cotangents of powers $k$ higher than $2$ will also be invariant, when $N>k$. This can be regarded as a corollary to \cref{thm:symmetric}.

\begin{table}
\begin{tabular}{|c|cccccc|}
\hline
k & N=3 & N=4 & N=5 & N=6 & N=7 & N=8 \\
\hline
1 & $\checkmark$ & 0 & $\checkmark$ & $\checkmark$ & $\checkmark$ & $\checkmark$ \\
2 & $\checkmark$ & $\checkmark$ & $\checkmark$ & $\checkmark$ & $\checkmark$ & $\checkmark$ \\
3 &  & 0 & $\checkmark$ & $\checkmark$ & $\checkmark$ & $\checkmark$ \\
4 &  &  & $\checkmark$ & $\checkmark$ & $\checkmark$ & $\checkmark$ \\
5 &  & 0 &  & $\checkmark$ & $\checkmark$ & $\checkmark$ \\
6 &  &  &  &  & $\checkmark$ & $\checkmark$ \\
7 &  & 0 &  &  &  & $\checkmark$ \\
\hline
\end{tabular}
\caption{A $\checkmark$ (resp. $0$) indicates that for a given $N$, $\sum\cot^k(\theta_i)$ is invariant. Note that we get invariance if (i) $N>k$ and (ii) $N=4$, odd $k$, in which case the sum is zero.}
\label{tab:cot-higher}
\end{table}
Since for $N=4$ opposite angles are supplementary:

\begin{corollary}
If $N=4$, $\sum\cot^k(\theta_i)=0$ for all odd $k$.
\end{corollary}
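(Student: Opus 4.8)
The plan is to exploit the hint already embedded in the statement: for $N=4$ a harmonic polygon is a cyclic quadrilateral, since by construction it is inscribed in the circle $C$. Consequently its opposite internal angles are supplementary. Labelling the vertices cyclically as $w_1,w_2,w_3,w_4$, the two pairs of opposite angles are those at $(w_1,w_3)$ and at $(w_2,w_4)$, so $\theta_3=\pi-\theta_1$ and $\theta_4=\pi-\theta_2$.

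Next I would invoke the oddness of cotangent under the reflection $\theta\mapsto\pi-\theta$, namely $\cot(\pi-\theta)=-\cot\theta$. This yields $\cot\theta_3=-\cot\theta_1$ and $\cot\theta_4=-\cot\theta_2$. Raising to an odd power $k$ preserves the sign reversal, so $\cot^k\theta_3=-\cot^k\theta_1$ and $\cot^k\theta_4=-\cot^k\theta_2$, and the four summands cancel in pairs:
\[
\sum_{i=1}^4\cot^k\theta_i=\bigl(\cot^k\theta_1+\cot^k\theta_3\bigr)+\bigl(\cot^k\theta_2+\cot^k\theta_4\bigr)=0.
\]
Note that this argument is valid for \emph{every} odd $k$, with no restriction of the form $N>k$, which matches the pattern recorded for $N=4$ in \cref{tab:cot-higher}.

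As a sanity check one can bypass the synthetic step entirely and read the cancellation directly off \cref{lem:chasing}. For $N=4$ we have $\alpha=\pi/4$, hence $2\alpha=\pi/2$, giving $\cot(2\alpha)=0$ and $\sin(2\alpha)=1$; the formula then collapses to $\cot\theta_k=-2d\cos(\tfrac{\pi k}{2}+t)/(d^2-1)$. Since $\cos(\tfrac{\pi(k+2)}{2}+t)=-\cos(\tfrac{\pi k}{2}+t)$, one recovers $\cot\theta_{k+2}=-\cot\theta_k$ immediately, confirming the supplementarity analytically and hence the vanishing of every odd-power sum.

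There is no genuine obstacle here: the corollary is an immediate consequence of a single elementary parity observation, and sits neatly within the framework of \cref{thm:symmetric}. The only point that deserves to be stated with care is the identification of the opposite-vertex pairing $(w_1,w_3)$, $(w_2,w_4)$, which is justified because the vertices inherit their cyclic order from the inscribing circle $C$.
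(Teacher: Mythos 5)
Your argument is correct and is essentially the paper's own: the authors justify the corollary with the single remark that for $N=4$ opposite angles are supplementary, which is exactly the cyclic-quadrilateral pairing and sign cancellation you spell out (your analytic check via \cref{lem:chasing} is a nice confirmation but not a different route). Nothing is missing.
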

  


  



\subsection{Comparing conservations across Poncelet families}

\cref{tab:conserve} shows Conservations proved side-by-side with those manifested by other Poncelet families, described and/or proved in \cite{akopyan2020-invariants,bialy2020-invariants,caliz2020-area-product,galkin2021-affine,reznik2021-fifty-invariants,roitman2021-bicentric}.

\begin{table}
\setlength{\tabcolsep}{2pt}
{\small
\begin{tabular}{|c||c|c|c|c|c|}
\hline
invariant & Confocal & Bicentric & Inversive & Homothetic & Harmonic \\
\hline
$L$ & \checkmark$^o$ & & \checkmark \cite{roitman2021-bicentric} & & \\
\hline
$A$ & & & & \checkmark$^o$ & \\
\hline
$L/A$ & & \checkmark$^o$ &  & & \\
\hline
$\sum{s_i}^2$ & & & & \checkmark \cite{galkin2021-affine} & \\
\hline
$\sum{s_i}^2/A$ & & & & \checkmark \cite{galkin2021-affine} & \checkmark$^o$ \\
\hline
$\sum{s_i^{-2}}$ & & & & & \checkmark \\
\hline
$\sum{r_i^{-2}}$ & & & & & \checkmark \\
\hline
$\sum{\cos}$ & \checkmark \cite{akopyan2020-invariants,bialy2020-invariants,caliz2020-area-product} & \checkmark \cite{roitman2021-bicentric} & \checkmark$^\dagger$ \cite{roitman2021-bicentric}  & & \\
\hline
$\sum{\cot}$ & & & & \checkmark \cite{galkin2021-affine} & \checkmark \\
\hline
$\sum{\cot^2}$ & & & & \checkmark$^\dagger$ \cite{galkin2021-affine} & \checkmark \\
\hline
$\sum{(\sin \cos)}/L$ & & $\checkmark$ & & & \\
\hline
$\sum{(\sin \cos)}/A$ & & $\checkmark$ & & & $\checkmark$ \\
\hline
$A_1 A_2$ & & \checkmark$^*$ & \checkmark$^*$ & & \\
\hline
{\scriptsize $A_1^{-1}+A_2^{-1}$} & & & & & \checkmark$^*$ \\
\hline
\hline
polar of & Bicentric & Confocal & -- & Harmonic & Homothetic \\
\hline
{\scriptsize
\makecell[cc]{Inversion\\Center}} & $\ell_1$ & $f_1,f_2$ & -- & $K$ & $f_1',f_2'$ \\
\hline
\end{tabular}
}
\caption{Quantities conserved by various Poncelet families and their polar-derived families. An $^o$ after a $\checkmark$ indicates the quantity is well-known. References are provided to extant proofs. The last two lines (Polar) indicate how to obtain the current family as the polar image of some other family with respect to a circle centered on the indicated inversion center. For the case of side areas ($A_1,A_2$) both foci are needed. Notes: $\dagger$: $N{\neq}4$. $*$: odd $N$.}
\label{tab:conserve}
\end{table}


\section{Harmonics and homothetics}
\label{sec:homothetics}
In this section we derive the  transformations required to jump from one of regular, harmonic, homothetic, to another. Let $\R$, $\P$, $x_0$, and $\alpha$ be as in the previous section. We omit most proofs since they were obtained with the aid of a Computer Algebra System (CAS). 

\subsection{From harmonics to homothetics}

Referring to \cref{fig:homot-polars}:

\begin{proposition}
The polar image of $\P$ with respect to a unit circle centered on the symmedian point $K$ of $\P$ is a new Poncelet family $\H$ of polygons interscribed between two homothetic, concentric ellipses $\E_H$ (external) and $\E_h$ (internal) given by: 
\begin{align*}
\E_{H}:& \frac{(x-x_H)^2}{a_H^2}+\frac{y^2}{b_H^2}-1=0,\;\;\;\E_h:   \frac{(x-x_h)^2}{a_h^2}+\frac{y^2}{b_h^2}-1=0,
\\
a_h& =\frac{(x_0^2 + 1)^2}{|1-x_0^2| },\;\; b_h=x_0^2+1,\;\; x_h=\frac{x_0(3x_0^4 + 3x_0^2 + 2)}{x_0^4-1}\\
a_H &=a_h/\cos\alpha,\;\; b_H   =b_h/\cos\alpha,\;\; x_H=x_h
\end{align*}
\end{proposition}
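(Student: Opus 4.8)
The plan is to treat the polar reciprocation as the engine and to pin down the two ellipses by a direct, CAS-assisted verification. First I would record the vertices of $\P$ in closed form: since $w_k=(d\bar z_k-1)/(\bar z_k-d)$ and $|z_k|=1$, the map $z\mapsto(d-z)/(1-dz)$ is a disk-preserving (Blaschke) Möbius map, so $w_k=\phi(z_k)$ with $\phi(z)=(d-z)/(1-dz)$ and $z_k=e^{i(2\alpha k+t)}$. Everything is symmetric about the real axis because $d=x_0$ is real, so the symmedian point $K$ lies on the real axis; I would take its explicit value $K=(x_0^3/(1+x_0^2),\,0)$ (equivalently, read it off \cref{app:harm}) and note, as a first consistency check, that $K$ is the focus of $\E_h$ predicted by the classical fact that the polar reciprocal of a circle about an interior point is an ellipse having that point as a focus.

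The conceptual backbone is that polar reciprocation about the unit circle centered at $K$ is an inclusion-reversing correlation that carries a Poncelet pair to a Poncelet pair. The family $\P$ is interscribed between the Brocard inellipse $\B$ (inscribed) and the circumcircle $C$ (circumscribed), with $K$ interior to both. Hence the reciprocal family $\H$ is interscribed between the reciprocal of $C$ — call it $\E_h$ — as the inner conic (the edges of $\H$ are the polars of the vertices $w_k\in C$, hence tangent to $\E_h$), and the reciprocal of $\B$ — call it $\E_H$ — as the outer conic (the vertices of $\H$ are the poles of the edges of $\P$, which are tangent to $\B$, hence lie on $\E_H$). It then remains only to identify $\E_h,\E_H$ and to prove they are homothetic.

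For the explicit forms I would use the matrix recipe for reciprocation: after translating $K$ to the origin, a point conic with symmetric matrix $Q$ reciprocates to the conic with matrix $JQ^{-1}J$, where $J=\mathrm{diag}(1,1,-1)$ is the unit circle at $K$. Applying this to $C$ (a circle centered on the real axis) gives $\E_h$; since $C$ carries no $N$-dependence, $\E_h$ depends only on $x_0$, matching the stated $a_h,b_h,x_h$ (which indeed contain no $\alpha$). To reach $\E_H$ I would sidestep the need for $\B$ in closed form by computing the vertices of $\H$ directly: the vertex opposite the edge $w_kw_{k+1}$ is the intersection of the polars of $w_k$ and $w_{k+1}$, i.e. the solution of $\langle X-K,\,w_k-K\rangle=\langle X-K,\,w_{k+1}-K\rangle=1$, a $2\times2$ linear system solved in closed form from the $w_k$. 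I would then substitute these vertices into a candidate quadratic and show they satisfy the equation of $\E_H$ for all $k$ and $t$, which pins $\E_H$ down.

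The main obstacle is the final simplification: showing that the outer conic is concentric with $\E_h$ and has the same eccentricity and axis directions (hence is a genuine homothet), and that the scaling factor is exactly $\cos\alpha$. The $t$-dependence of the vertices enters only through $\cos\nu_k,\cos\nu_{k+1}$ with $\nu_{k+1}-\nu_k=2\alpha$, and both the invariance and the half-angle factor $\cos\alpha=\cos(\pi/N)$ must be extracted via the trigonometric-sum identities already used in \cref{sec:conserved}; the coincidence of centers is the delicate algebraic point. Once homothety is established, the ratio is forced to be $\cos(\pi/N)$ by the elementary fact that a Poncelet $N$-gon family between homothetic ellipses is an affine image of regular $N$-gons between concentric circles, whose radius ratio is the apothem-to-circumradius ratio $\cos(\pi/N)$; this yields $a_H=a_h/\cos\alpha$, $b_H=b_h/\cos\alpha$, $x_H=x_h$. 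This last stretch is precisely the CAS computation the authors allude to.
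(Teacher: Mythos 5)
The paper offers no written proof of this proposition---the authors state at the top of \cref{sec:homothetics} that the proofs there were obtained with a CAS and are omitted---so there is no argument to compare against line by line; your proposal is a sound reconstruction of what such a verification must look like. The duality bookkeeping is correct (the sides of $\H$ are the polars of the vertices of $\P$, hence tangent to the reciprocal of the circumcircle, which is therefore the \emph{inner} conic $\E_h$; the vertices of $\H$ are the poles of the sides, hence lie on the reciprocal of the Brocard inellipse, the \emph{outer} conic $\E_H$), and the portion of the statement you can obtain essentially by hand does check out: reciprocating the circumcircle $(O,R)$ in a unit circle at $K$ gives a conic with focus $K$, semi-latus rectum $1/R$ and eccentricity $e=|K-O|/R$; with $O$, $R$, $K$ as in \cref{app:harm} one gets $e=2|x_0|/(x_0^2+1)$, $1-e^2=(x_0^2-1)^2/(x_0^2+1)^2$, hence $a_h=(x_0^2+1)^2/|1-x_0^2|$, $b_h=x_0^2+1$, and a center displaced from $K$ by $a_h e=2|x_0|(x_0^2+1)/|x_0^2-1|$, which agrees with the stated $x_h$. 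Your closing observation---that once concentric homothety is known, the ratio is forced to equal $1/\cos(\pi/N)$ because the dual family still closes after $N$ steps and an affine normalization reduces it to regular polygons between concentric circles---is a genuine improvement over brute force, as it eliminates half of the symbolic computation. The one step you have not actually carried out is precisely the one the authors also leave to the machine: computing $\E_H$ (the dual of the Brocard inellipse about $K$, or equivalently the conic through the intersections of consecutive polars) and verifying that it is concentric and coaxial with $\E_h$ with the same axis ratio. Your ratio argument \emph{presupposes} homothety rather than establishing it, so the proposal is a correct and well-structured plan whose decisive algebraic verification remains delegated to a CAS, exactly as in the paper.
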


\subsection{From homothetics back to harmonics} Let $\H$ be a family of Poncelet $N$-gons interscribed between two concentric, homothetic ellipses $\E_H=(a_H,b_H)$ and $\E_h=(a_h,b_h)$ with common centers at $(0,0)$. Let $f_h=(-c_h,0)$ be a focus of $\E_h$, where $c_h^2=a_h^2-b_h^2$. 

\begin{proposition}
The polar image of $\H$ 
with respect to a unit circle centered on $f_h$ is a harmonic family inscribed in a circle $\C_1=(O_1,R_1)$ and circumscribing an ellipse $\E_1$ with semiaxes $(a_1,b_1)$ and centered on $(x_1,0)$ where:

\begin{align*}
O_1=&\left[-c_h\frac{(1+b_h^2)}{b_h^2},0\right],\;\;\;R_1=\frac{a_h}{b_h^2}\\
x_1&=-c_h \left(a_h^2+(1 -c_h^{2}) \cos^2\alpha \right)/k^2\\
a_1&=  a_h\cos\alpha /k^2,\;\;\;b_1= {a_h\cos\alpha}/(b_h k)
\end{align*}
where $k^2= a_h^2-c_h^{2} \cos^2\alpha$. Furthermore, the symmedian $K_1$ of the harmonic family coincides with $f_h$.
\end{proposition}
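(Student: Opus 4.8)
The plan is to obtain the entire statement from a single application of polar (projective) duality, and then to recognize that map as an involution. Reciprocation with respect to a fixed circle $\Gamma$ interchanges points and lines while preserving incidences, so it carries a Poncelet family interscribed between an outer conic and an inner conic to the Poncelet family interscribed between the duals of those conics, with inner and outer roles swapped; closure is preserved because incidence is. Applied to $\H$ with $\Gamma$ the unit circle centered at $f_h$, the tangent lines to the inner ellipse $\E_h$ (the sides of members of $\H$) become points on $\E_h^{*}$ --- the vertices of the new family --- while the points of the outer ellipse $\E_H$ (the vertices of members of $\H$) become tangent lines to $\E_H^{*}$ --- the sides of the new family. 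Thus the new family is inscribed in $\E_h^{*}$ and circumscribes $\E_H^{*}$, which is the correct shape for a harmonic family.

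The first structural input is the classical fact that the polar reciprocal of a conic with respect to a circle centered at one of its foci is a circle. Since $f_h$ is a focus of $\E_h$, the conic $\E_h^{*}$ is a circle $\C_1$; and since $f_h$ lies inside $\E_H$, the reciprocal $\E_H^{*}=\E_1$ is a genuine ellipse. I would obtain the explicit parameters $O_1,R_1$ and $x_1,a_1,b_1$ by the routine reciprocation computation: translate $f_h$ to the origin, represent each ellipse by its symmetric matrix, dualize (the matrix of the reciprocal conic is the adjugate conjugated by the matrix of $\Gamma$), translate back, and read off center, semiaxes and radius. This is the CAS-assisted step; the only care needed is bookkeeping of the translation by $c_h$ and of the homothety ratio $a_H/a_h=b_H/b_h=1/\cos\alpha$ forced by Poncelet closure for $N$-gons between homothetic ellipses.

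The substantive point is that ``inscribed in a circle and circumscribing an ellipse'' is not by itself enough to be harmonic, so I must certify that $\E_1$ is actually the Brocard inellipse and that $f_h$ is the symmedian. Here I would invoke the involution. By the preceding (harmonic-to-homothetic) Proposition, every homothetic family with the closure ratio arises, up to similarity, as the reciprocal of a harmonic family $\P_0$ with respect to the unit circle centered on the symmedian $K_0$ of $\P_0$; in that construction $\E_h$ is the reciprocal of the circumcircle of $\P_0$, and a second classical fact --- the reciprocal of a circle with respect to a point has that point as a focus --- pins a focus of $\E_h$ at $K_0$. Hence $f_h=K_0$, the two unit circles coincide, and reciprocating $\H$ with respect to the unit circle at $f_h$ is precisely reciprocating $\P_0$ twice with the same circle, which returns $\P_0$. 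Therefore the new family is $\P_0$, is harmonic, and has symmedian $K_1=f_h$, which yields the final assertion for free.

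The main obstacle is exactly this identification step: the focus-equals-center-of-reciprocation fact must be verified in the present normalization, and one must confirm that the range of homothety shapes produced by the first Proposition covers the given family so that the involution genuinely applies. As a self-contained alternative that bypasses the first Proposition, I would verify harmonicity intrinsically on the computed vertices --- either exhibit the Blaschke (disk-automorphism) image of a regular $N$-gon on $\C_1$ matching the construction of \cref{sec:conserved}, or check the defining symmedian condition that the distance from $f_h$ to each sideline is proportional to that side's length --- and then simply record the stated $O_1,R_1,x_1,a_1,b_1$ from the reciprocation.
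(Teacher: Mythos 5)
The paper actually omits the proof of this proposition entirely, stating only that it was obtained with a CAS, so your conceptual argument is a genuinely different (and more informative) route. Your outline is sound: reciprocation in a circle preserves incidence and hence Poncelet closure while swapping the inner/outer roles; the dual of $\E_h$ in a circle centered at its focus $f_h$ is indeed a circle (its pedal at a focus is the auxiliary circle, and the reciprocal is the inverse of the pedal), and this correctly reproduces the stated $O_1=[-c_h(1+b_h^2)/b_h^2,\,0]$ and $R_1=a_h/b_h^2$ via the standard circle-inversion formulas, so the ``CAS step'' you defer is exactly the bookkeeping the authors also defer. Your involution argument for harmonicity and for $K_1=f_h$ is legitimate, since one checks from the preceding proposition's data that $x_h+c'_h=x_0^3/(x_0^2+1)=K$, i.e.\ the symmedian really is a focus of the image's inner ellipse; reciprocating twice in the same circle is the identity, so the dual of $\H$ at that focus returns the original harmonic family. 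The one loose end you rightly flag --- that the given $\H$ must lie in the image of the harmonic-to-homothetic map --- does need to be closed, but it closes easily: the aspect ratio $a_h/b_h=(x_0^2+1)/|1-x_0^2|$ sweeps all of $(1,\infty)$ as $x_0$ ranges over $(0,1)$, and an overall scaling of $\H$ about $f_h$ commutes with reciprocation up to an inverse scaling about the same point, under which harmonicity and the location of the symmedian are preserved. With that sentence added, your proof is complete and arguably preferable to the paper's unexhibited computation; your self-contained fallback (verifying the symmedian distance-proportional-to-sidelength condition directly on the computed vertices) is also a valid, if heavier, alternative.
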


\begin{corollary}
Let $\delta=|K_1-O_1|$.
\[ \left(\frac{\delta}{R_1}\right)^2 = 1-\left(\frac{b_h}{a_h}\right)^2 \]
\end{corollary}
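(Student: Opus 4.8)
The plan is to prove the corollary by direct substitution of the closed forms for $O_1$, $R_1$, and $K_1$ furnished by the preceding proposition, exploiting the fact that both $K_1=f_h$ and $O_1$ lie on the $x$-axis, so that $\delta$ is merely a one-dimensional distance.

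First I would evaluate $\delta=|K_1-O_1|$. With $K_1=(-c_h,0)$ and $O_1=\bigl(-c_h(1+b_h^2)/b_h^2,\,0\bigr)$, the two second coordinates vanish, so $\delta$ is the absolute value of the difference of the first coordinates. Factoring out $c_h$ and combining over the common denominator $b_h^2$ collapses the numerator to $-b_h^2+(1+b_h^2)=1$, giving $\delta=c_h/b_h^2$ (taking $c_h>0$).

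Next I would form the ratio with $R_1=a_h/b_h^2$: the shared factor $b_h^2$ cancels and $\delta/R_1=c_h/a_h$, so that $(\delta/R_1)^2=c_h^2/a_h^2$. Finally, substituting the focal relation $c_h^2=a_h^2-b_h^2$ recorded before the proposition yields $(\delta/R_1)^2=(a_h^2-b_h^2)/a_h^2=1-(b_h/a_h)^2$, which is the asserted identity.

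In truth there is no genuine obstacle here: the corollary is a one-line algebraic consequence, and the only points requiring minor care are the sign convention in removing the absolute value and the cancellation of the common $b_h^2$ denominators. The single conceptual remark worth making is that $c_h/a_h$ is precisely the eccentricity of the inner ellipse $\E_h$, so the identity admits the compact geometric reading that $\delta/R_1$ equals that eccentricity.
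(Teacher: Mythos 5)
Your computation is correct and is exactly the direct verification the paper intends (the paper omits the proof, as it states all results in that section follow by substitution from the preceding proposition, possibly CAS-assisted): $\delta = c_h/b_h^2$, $R_1 = a_h/b_h^2$, hence $(\delta/R_1)^2 = c_h^2/a_h^2 = 1-(b_h/a_h)^2$. Your closing observation that $\delta/R_1$ is the eccentricity of $\E_h$ is a nice geometric gloss consistent with the analogous formula for $\varepsilon$ in the paper's appendix.
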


\subsection*{Lateral harmonic areas}

Let $\H$ be a Poncelet family of $N$-gons interscribed between two homothetic, concentric ellipses $\E_H,\E_h$. Let $f_{h,1},f_{h,2}$ denote the foci of $\E_h$.
Let $A_1$ (resp. $A_2$) denote the area of the harmonic polygon which is a polar image of $\H$ with respect to a circle centered on $f_{h,1}$ (resp. $f_{h,2}$). Note that if $N$ is even, a polygon in the homothetic family is centrally symmetric. Therefore, $A_1=A_2$, with each area variable. When $N$ is odd, these areas are in general distinct.

\begin{proposition}
For $N=3$ and $N=5$, $1/A_1+1/A_2$ is invariant and given by:

\begin{align*}
N=3:&\;\; \frac{\sqrt{3}}{18}\frac{b}{a} (a^2+3b^2)\\
N=5:& \;\;\frac{b}{40\sin(2\pi/5)a}
{\frac { \left( {a}^{4}+10\,{b}^{2}{a}^{2}+5\,{b}^{4} \right)  
 \left(  \sqrt {5}(a^2+3\,{b}^{2}) +5\,{a}^{2}+7\,{b}^{2}
 \right) }{    5\,{a}^{4}+10\,{b
}^{2}{a}^{2}+{b}^{4}  }}\\
\end{align*}
\end{proposition}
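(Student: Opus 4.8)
The plan is to reduce the whole statement to the explicit harmonic model of \cref{sec:conserved} and then to a single trigonometric cancellation. First I would parametrize the homothetic family $\H$ as an affine image of the rotating regular $N$-gon family $\R$: writing a vertex of the outer ellipse as $V_k(t)=(a_H\cos\nu_k,\,b_H\sin\nu_k)$ with $\nu_k=2\alpha k+t$, the homothety ratio $\cos\alpha$ between $\E_H$ and $\E_h$ forces $\{V_k(t)\}$ to be tangent to $\E_h$, so $t$ is the family parameter. By the two preceding propositions of \cref{sec:homothetics}, the polar image of $H(t)$ with respect to a unit circle centered at a focus $f_{h,j}$ of $\E_h$ is a harmonic polygon inscribed in a circle $\C_1=(O_1,R_1)$ with $R_1=a_h/b_h^2$ and symmedian $K_1=f_{h,j}$. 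Crucially, the scalar $d$ of the model $w_k=(d\bar z_k-1)/(\bar z_k-d)$ is fixed by the shape of the family, hence by $b_h/a_h$, and is the same for either focus: combining the location of the symmedian in the unit model (the limiting points $d,d^{-1}$ are inverse with respect to every circle of the Schoute pencil, in particular the Brocard circle of diameter $K_1O_1$, which gives $K_1=2d/(1+d^2)$) with the preceding corollary yields $b_h/a_h=(1-d^2)/(1+d^2)$, i.e. $d^2=(a_h-b_h)/(a_h+b_h)$. Both $P_1(t)$ and $P_2(t)$ are therefore instances of the single unit-circle model for one common $d$, rescaled by $R_1$ (the translation by $O_1$ being irrelevant to area).

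Second, the reflection across the $y$-axis fixes the concentric ellipses, swaps the foci $f_{h,1}\leftrightarrow f_{h,2}$, and sends $H(t)$ to a reflected member $H(\sigma(t))$ of the same family, where $\sigma$ is an explicit orientation-reversing phase involution (essentially $\nu_k\mapsto\pi-\nu_k$ up to relabeling). Since reflection is an isometry, $A_2(t)=A_1(\sigma(t))$, so it suffices to compute one area function $A(t):=A_1(t)$. I would obtain $A(t)$ from the complex signed-area formula for a cyclic polygon: writing the rescaled model vertices as $R_1 e^{i\mu_k}$ on the circumcircle, $A(t)=\tfrac12 R_1^2\,\bigl|\operatorname{Im}\sum_k e^{i(\mu_{k+1}-\mu_k)}\bigr|=\tfrac12 R_1^2\sum_k\sin(\mu_{k+1}-\mu_k)$, where the arcs $\mu_{k+1}-\mu_k$ are read off from the Möbius image of the regular vertices $z_k=e^{i\nu_k}$ using $w_k-w_{k-1}=(\bar z_k-\bar z_{k-1})(1-d^2)/[(\bar z_k-d)(\bar z_{k-1}-d)]$ together with $|z_k-d|^2=1+d^2-2d\cos\nu_k$. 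This produces $A(t)$ as an explicit rational-trigonometric function of $t$, with $d$ and $R_1$ as constants.

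Finally, I would form $1/A_1(t)+1/A_2(t)=1/A(t)+1/A(\sigma(t))$ and show the $t$-dependence cancels. Unlike the sums in \cref{sec:conserved}, each $A(t)$ is itself a sum of trigonometric terms, so its reciprocal is not a trigonometric polynomial; the cancellation is an algebraic identity between two rational expressions rather than a direct application of the orthogonality relations $\sum\cos(\cdots)=0$ used earlier. This is the main obstacle, and the reason the statement is confined to $N=3,5$: for these cases one can clear denominators, expand by the product-to-sum identities, and verify (with a CAS, as the paper notes) that every nonconstant harmonic in $t$ drops out, leaving a constant. Substituting $d^2=(a_h-b_h)/(a_h+b_h)$, the overall scale $R_1^{-2}=b_h^4/a_h^2$, and $\alpha=\pi/N$ into that constant then yields the two closed forms in the statement, with $(a,b)=(a_h,b_h)$ the semi-axes of $\E_h$; the homogeneity (degree $2$ in $a,b$, reflecting the fixed unit inversion circle) is a useful consistency check. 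For general odd $N$ the same reduction should apply, but the degree of the rational identity grows rapidly, which is why invariance is only conjectured (\cref{conj:inv-area-sum}) beyond $N=5$.
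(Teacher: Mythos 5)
The paper gives no written proof of this proposition: it falls under the blanket remark at the start of \cref{sec:homothetics} that proofs there ``were obtained with the aid of a Computer Algebra System,'' so the intended argument is exactly a direct symbolic computation of the two polar-image areas followed by CAS simplification of $1/A_1+1/A_2$. Your plan is consistent with that and adds useful structure the paper does not spell out: the reflection across the minor axis giving $A_2(t)=A_1(\pi-t)$ (which the paper only hints at with ``mirrored, out-of-phase''), and the reduction of both lateral polygons to the single unit-circle M\"obius model of \cref{sec:conserved} with $d^2=(a_h-b_h)/(a_h+b_h)$ --- which correctly matches the paper's ``closing the loop'' value $x_0=\sqrt{(a+b)/(a-b)}$ via $d=1/x_0$, the two limiting points generating the same family. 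Your degree-$2$ homogeneity check and the identification of $(a,b)$ with the semiaxes of $\E_h$ are also right. The one honest caveat is that the crux --- showing the $t$-dependence of $1/A(t)+1/A(\pi-t)$ cancels --- remains a rational-trigonometric identity you verify by CAS for $N=3,5$ rather than by a conceptual argument; but that is precisely the standard the paper itself adopts, and your observation that the reciprocal structure blocks the orthogonality arguments of \cref{sec:conserved} correctly explains why the general odd-$N$ case is left as \cref{conj:inv-area-sum}.
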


Experimentally, the following holds:

\begin{conjecture}
For any odd $N$, $1/A_1+1/A_2$ is invariant.
\label{conj:inv-area-sum}
\end{conjecture}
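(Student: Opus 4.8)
The plan is to obtain closed forms for $A_1(t)$ and $A_2(t)$ as functions of the Poncelet phase $t$ and to show that their inverses are affine in $\cos(Nt)$, after which the invariance of $1/A_1+1/A_2$ reduces to a single phase identity. The cornerstone is the following closed form for the area of a harmonic polygon in the unit-circle construction of \cref{sec:conserved}: with $w_k=(d-z_k)/(1-dz_k)$ and $z_k=e^{i(2\alpha k+t)}$, one has
\[
A(t)=\frac{K_0}{1-2d^{N}\cos(Nt)+d^{2N}},\qquad K_0=\frac{N(1-d^2)^2(1-d^{2N})\sin\alpha\cos\alpha}{1-2d^2\cos2\alpha+d^4},
\]
so that $1/A(t)$ is a pure first harmonic in $t$. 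To prove this I would start from the shoelace area $A=\tfrac12\sum_k\operatorname{Im}(\bar w_k w_{k+1})$ and compute, exactly as in the proof of \cref{lem:chasing}, that $\bar w_k w_{k+1}=N_k/\bar N_k$, where $N_k$ has the $k$-independent imaginary part $Y=(1-d^2)\sin\alpha$ and $\bar N_k=e^{-i\alpha}(1-d\bar z_k)(1-dz_{k+1})$. Hence $\operatorname{Im}(\bar w_k w_{k+1})=2X_kY/|N_k|^2$ with $X_k=\operatorname{Re}N_k$, so $A=Y\sum_k\operatorname{Re}(1/\bar N_k)$, and expanding $1/(1-d\bar z_k)$ and $1/(1-dz_{k+1})$ as geometric series reduces the sum to $\sum_k e^{i(n-m)2\alpha k}=N\,[\,N\mid(n-m)\,]$. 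Using $e^{2iN\alpha}=1$, the surviving terms collapse to the Poisson kernel $\sum_{j}d^{|j|N}e^{ijNt}=(1-d^{2N})/(1-2d^N\cos Nt+d^{2N})$, which yields the displayed formula.

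Next I would apply this to the two lateral families. Each is itself a harmonic family, and since they are congruent mirror images of one another they share the same intrinsic constants, so
\[
A_i(t)=\frac{\kappa}{\,1-2\delta\cos(Nt+\varphi_i)+\delta^2\,},\qquad i=1,2,
\]
with common $\kappa,\delta$ and phases $\varphi_1,\varphi_2$. The reflection $\sigma:(x,y)\mapsto(-x,y)$ fixes the homothetic pair $\E_H,\E_h$, swaps the foci $f_{h,1},f_{h,2}$, preserves area, and is equivariant for polar duality (it conjugates polarization at $f_{h,1}$ to polarization at $f_{h,2}$); therefore $A_2(t)=A_1(a-t)$ for the induced orientation-reversing involution $t\mapsto a-t$, which forces $\varphi_1+\varphi_2\equiv-Na$. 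Consequently
\[
\frac1{A_1}+\frac1{A_2}=\frac1\kappa\Big[\,2(1+\delta^2)-2\delta\big(\cos(Nt+\varphi_1)+\cos(Nt+\varphi_2)\big)\,\Big],
\]
which is independent of $t$ precisely when $\cos\!\big((\varphi_1-\varphi_2)/2\big)=0$, i.e. when $\varphi_1-\varphi_2\equiv\pi$. Equivalently, the two lateral area functions must differ by a half Poncelet period, $A_2(t)=A_1(t+\alpha)$ with $\alpha=\pi/N$.

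The main obstacle is exactly this antiphase identity $\varphi_1-\varphi_2\equiv\pi$, which is where the parity of $N$ must enter: for even $N$ a homothetic polygon is centrally symmetric, the two polar images coincide ($\varphi_1=\varphi_2$), and the oscillating term does not cancel, in agreement with the conjecture being stated only for odd $N$. I see two routes. The explicit route computes $\varphi_1$ (the value of $t$ maximizing $A_1$) and the offset $a$ directly from the \emph{homothetics back to harmonics} correspondence and verifies $2\varphi_1+Na\equiv\pi$ for every odd $N$; this is the uniform replacement for the $N=3,5$ computer-algebra checks. The structural route observes that the half-step $t\mapsto t+\alpha$ rotates the underlying regular polygon $\R$ by $\pi/N$, which for odd $N$ carries the edge-directions onto the opposite vertex-directions and thereby interchanges the polar images with respect to $f_{h,1}$ and $f_{h,2}$, giving $A_2(t)=A_1(t+\alpha)$ at once. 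The difficulty in either route is tracking the induced reparametrization of $t$ through the chain regular $\to$ homothetic $\to$ focal polar $\to$ harmonic for general $N$ and proving that the focus-swap corresponds to precisely a half-step; this is the single point at which all of the $N$-dependence is concentrated.
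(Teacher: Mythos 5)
The paper does not prove this statement: it is posed as \cref{conj:inv-area-sum} on experimental grounds, with only the $N=3$ and $N=5$ cases settled by an unexhibited computer-algebra computation in the preceding proposition. So there is no proof of record to compare against, and your proposal must be judged as an attack on an open claim. Its centerpiece, the closed form $A(t)=K_0/(1-2d^N\cos(Nt)+d^{2N})$, is correct: I checked that $\operatorname{Im}N_k=(1-d^2)\sin\alpha$ is indeed $k$-independent, that the double geometric series collapses onto the residues $n\equiv m\pmod N$ and produces the Poisson kernel, and that the resulting $K_0$ specializes at $d=0$ to the regular-polygon area $N\sin\alpha\cos\alpha$. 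This already reduces the conjecture to a statement about two phases, which is genuine progress beyond what the paper records.

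The gap you flag at the end is real, but it is smaller than you make it sound, and your ``structural route'' already closes half of it. For odd $N$ the point reflection $\iota$ through the common center of $\E_H,\E_h$ satisfies $\iota R(t)=R(t+\alpha)$ (since $\pi=N\alpha\equiv\alpha$ modulo the vertex spacing $2\alpha$ exactly when $N$ is odd), hence $\iota H(t)=H(t+\alpha)$; it swaps $f_{h,1}$ with $f_{h,2}$ and conjugates polarity at one focus into polarity at the other, so $A_2(t)=A_1(t\mp\alpha)$ holds exactly, not just heuristically. What remains --- and what your writeup quietly assumes when you write $A_i(t)=\kappa/(1-2\delta\cos(Nt+\varphi_i)+\delta^2)$ in the \emph{homothetic} parameter $t$ --- is that the reparametrization $\tau_1(t)$ carrying the homothetic phase to the phase of the regular polygon underlying $P_1$ is a rigid rotation $\tau_1(t)=\pm t+\mathrm{const}$, not an arbitrary monotone map; without this, $1/A_1(t)+1/A_1(t+\alpha)$ need not be constant even granting the antiphase and your area formula. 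This lemma does hold: the composite correspondence from the circumcircle of $\R$ to the circumcircle of $P_1$ is a chain of projective maps between conics, hence a M\"obius map of circles, and since it must send every regular $N$-gon to a regular $N$-gon it cannot contain a nontrivial Blaschke factor (which would produce properly harmonic, non-regular images) and is therefore an isometry. Supplying that one argument turns your sketch into a complete proof and would upgrade the paper's conjecture to a theorem; as written, the proposal is an essentially correct plan with one load-bearing step asserted rather than proved.
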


If the \cref{conj:inv-area-sum} holds then:
\begin{corollary}
$\frac{1}{\sum{s_{i,1}^2}}+\frac{1}{\sum{s_{i,2}^2}}$ is invariant.
\end{corollary}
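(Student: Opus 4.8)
The plan is to reduce the claim about inverse sums of squared sidelengths to the area statement of \cref{conj:inv-area-sum}, using the invariance of $\bigl(\sum_i s_i^2\bigr)/A$ over a single harmonic family, recorded with an $^o$ (well-known) in \cref{tab:conserve}. The idea is that over any harmonic Poncelet family this ratio is constant in the phase $t$, so each squared-sidelength sum is a fixed multiple of the corresponding area, and the two statements become equivalent up to a single scalar.

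First I would fix the two lateral families: family $1$ is the polar dual of $\H$ about $f_{h,1}$ and family $2$ is the polar dual about $f_{h,2}$. Invoking the $^o$-marked invariance, write
\[
\sum_i s_{i,1}^2 = \kappa_1\,A_1, \qquad \sum_i s_{i,2}^2 = \kappa_2\,A_2,
\]
where $\kappa_1,\kappa_2$ are constants, each independent of $t$. The whole reduction then hinges on showing $\kappa_1=\kappa_2$.

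The crucial step is a symmetry argument for this equality. Since $\E_H$ and $\E_h$ are concentric and symmetric about their common axis, the reflection $\sigma$ across that axis fixes the homothetic pair, maps $\H$ onto itself, and swaps the two foci $f_{h,1}\leftrightarrow f_{h,2}$. Because polar duality commutes with isometries, $\sigma\bigl(\mathrm{polar}_f(X)\bigr)=\mathrm{polar}_{\sigma(f)}\bigl(\sigma(X)\bigr)$, family $2$ is exactly the $\sigma$-image of family $1$. As reflections preserve both sidelengths and area, the ratio $\sum_i s_i^2/A$ takes the same value on both families, giving $\kappa_1=\kappa_2=:\kappa$.

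With this in hand the identity
\[
\frac{1}{\sum_i s_{i,1}^2}+\frac{1}{\sum_i s_{i,2}^2}
=\frac{1}{\kappa}\left(\frac{1}{A_1}+\frac{1}{A_2}\right)
\]
is immediate, and its right-hand side is invariant as soon as \cref{conj:inv-area-sum} is assumed, which proves the corollary. The only genuine obstacle is the justification of $\kappa_1=\kappa_2$; everything else is bookkeeping. I would also verify carefully that the $^o$-marked invariance of $\sum_i s_i^2/A$ holds for harmonic families in the precise form needed (constant in $t$ for fixed circumcircle and Brocard-inellipse data), since the entire argument rests on replacing each squared-sidelength sum by a constant multiple of the corresponding lateral area.
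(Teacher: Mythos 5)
Your proposal is correct and follows essentially the same route as the paper: the paper's justification is precisely that $\sum s_i^2 = 4A\cot\omega$ (Casey/Simmons), with $\omega$ constant over each lateral family and equal for the two families by the symmetry of the foci about the center of the homothetic pair — your abstract constant $\kappa$ is exactly $4\cot\omega$, and your reflection argument is the same symmetry the paper invokes.
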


This stems from the fact that for any harmonic polygon $\cot\omega=\sum{s_i^2}/(4A)$ \cite[§16, pp. 298]{simmons1886}, where $s_i$ is the ith sidelength of a harmonic polygon, and both polar images (by symmetry of the foci with respect to the center of the homothetic family) have the same $\omega$.

\begin{conjecture}
$\frac{\sum{\sin(2\theta_i)}}{A}$ is invariant. Equivalently, $\frac{\sum{\sin(2\theta_i)}}{\sum{s_i^2}}$ is invariant. 
\end{conjecture}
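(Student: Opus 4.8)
The plan is to reduce the statement to a clean proportionality between two sums of Blaschke products, dispatching the ``equivalently'' clause with the identity already recorded above. Since $\cot\omega=\sum s_i^2/(4A)$ with $\omega$ constant over the family, one has $\sum s_i^2 = 4\cot\omega\,A$ with a $t$-independent factor; hence $\sum\sin(2\theta_i)/A$ is invariant if and only if $\sum\sin(2\theta_i)/\sum s_i^2$ is, and it suffices to treat the first form. I would keep the construction of \cref{sec:conserved}: writing $z_k=e^{i(2\alpha k+t)}$ and using $|z_k|=1$, the vertex map becomes the disk automorphism $w_k=(d-z_k)/(1-d z_k)$, while $\bar w_{k-1}=(1-d z_{k-1})/(d-z_{k-1})$. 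Putting $\zeta=e^{2i\alpha}$ gives $z_{k-1}=\zeta^{-1}z_k$, $z_{k+1}=\zeta z_k$, $\zeta^N=1$, and the $z_k$ are exactly the $N$ roots of $z^N=W$ with $W:=e^{iNt}$.

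Next I would rewrite the two relevant aggregates as imaginary parts of Blaschke sums. For a convex polygon inscribed in the unit circle, writing $w_k\bar w_{k-1}=e^{i\beta_k}$ for the arc increments, the area is $A=\tfrac12\sum_k\sin\beta_k=\tfrac12\,\text{Im}\sum_k w_k\bar w_{k-1}$, and by the inscribed-angle theorem the internal angle at $w_k$ is $\theta_k=\pi-\tfrac12(\beta_k+\beta_{k+1})$, so that $\sin(2\theta_k)=-\sin(\beta_k+\beta_{k+1})=-\text{Im}\,(w_{k+1}\bar w_{k-1})$. Setting $g(z)=\frac{(1-d\zeta^{-1}z)(d-z)}{(d-\zeta^{-1}z)(1-dz)}$ and $f(z)=g(z)\,g(\zeta z)=\frac{(1-d\zeta^{-1}z)(d-\zeta z)}{(d-\zeta^{-1}z)(1-d\zeta z)}$, this yields $2A=\text{Im}\,G(W)$ and $\sum_k\sin(2\theta_k)=-\text{Im}\,F(W)$, where $G(W)=\sum_k g(z_k)$ and $F(W)=\sum_k f(z_k)$. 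Thus the target reduces to showing $\text{Im}\,F=\lambda\,\text{Im}\,G$ on $|W|=1$ for some real $\lambda=\lambda(d,N)$ that does not depend on $t$.

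Then I would evaluate $G$ and $F$ in closed form by residues. Both $g$ and $f$ are proper after subtracting the constant $1$, with simple poles at $z\in\{1/d,\,d\zeta\}$ and $z\in\{1/(d\zeta),\,d\zeta\}$ respectively; crucially every pole $p$ satisfies $p^N\in\{d^N,d^{-N}\}$ because $\zeta^N=1$. Using the logarithmic-derivative identity $\sum_{z^N=W}(z-p)^{-1}=-Np^{N-1}/(p^N-W)$, each sum collapses to $N$ plus two resonant terms, so $F$ and $G$ are rational in $W$ with poles only at the real points $W=d^N$ and $W=d^{-N}$; concretely both take the shape $N+\dfrac{a}{d^N-W}+\dfrac{b}{d^{-N}-W}$ with $a,b$ explicit in $d$ and $\zeta$.

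The heart of the argument, and the step I expect to be the main obstacle, is to show $\text{Im}\,F=\lambda\,\text{Im}\,G$ for a single real constant $\lambda$. Two equivalent formulations are available. In the first, one expands each imaginary part on $|W|=1$ (write $W=e^{i\psi}$, $\psi=Nt$) in the linearly independent functions $1$, $1/D_\pm$, $\cos\psi/D_\pm$, $\sin\psi/D_\pm$ with $D_\pm=|d^{\pm N}-W|^2$, and matches coefficients, producing an overdetermined system in the single unknown $\lambda$ whose consistency is exactly the content of the conjecture. In the second, one seeks real $\lambda$ for which $H:=F-\lambda G$ is real on the unit circle, i.e. satisfies the Schwarz-reflection identity $H(W)=\widetilde H(1/W)$ (tilde denoting conjugation of coefficients); since $H$ is rational with poles only at the real points $d^{\pm N}$, this is a finite set of polynomial identities in $d$. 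The difficulty is that the residues $a,b$ carry the full $\zeta=e^{2\pi i/N}$ dependence, so verifying the required relations uniformly in $N$ will likely demand a symmetric-function computation in the $N$-th roots of unity in the spirit of \cref{thm:symmetric}, rather than a one-line identity; this is presumably why the companion area statement has so far been confirmed only for $N=3,5$.
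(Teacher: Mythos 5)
First, a point of reference: the paper does not prove this statement at all --- it is stated as a conjecture supported only by experimental evidence, so there is no proof of record to compare yours against. Your reduction itself is sound as far as it goes: the dispatch of the ``equivalently'' clause via the constancy of $\cot\omega=\sum s_i^2/(4A)$ is correct; the rewriting $w_k=(d-z_k)/(1-dz_k)$ agrees with the paper's $w_k=(d\bar z_k-1)/(\bar z_k-d)$ on $|z_k|=1$; the identities $2A=\operatorname{Im}\sum_k w_k\bar w_{k-1}$ and $\sin(2\theta_k)=-\operatorname{Im}(w_{k+1}\bar w_{k-1})$ (via the inscribed-angle theorem) check out for the convex inscribed polygon; and the collapse of $\sum_{z^N=W}g(z)$ and $\sum_{z^N=W}f(z)$ to rational functions of $W=e^{iNt}$ with poles only at $W=d^{\pm N}$, via $\sum_{z^N=W}(z-p)^{-1}=-Np^{N-1}/(p^N-W)$, is a genuinely useful structural observation that goes beyond what the paper records.

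The gap is the one you name yourself: the proportionality $\operatorname{Im}F=\lambda\operatorname{Im}G$ on $|W|=1$ for a $W$-independent real $\lambda$ is not a technicality to be deferred --- it \emph{is} the conjecture, merely transported into the language of residues. Nothing in your argument forces the two residue pairs (which carry the $\zeta=e^{2\pi i/N}$ dependence in different ways, since $g$ has poles at $1/d$ and $d\zeta$ while $f$ has them at $\zeta^{-1}/d$ and $d\zeta$) to produce imaginary parts lying on a common line in the four-dimensional space spanned by $1$, $1/D_\pm$, $\cos\psi/D_\pm$, $\sin\psi/D_\pm$; your ``overdetermined system in one unknown'' could in principle be inconsistent, and you give no mechanism --- symmetric-function or otherwise --- that guarantees consistency for general $N$. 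Until that finite set of polynomial identities in $d$ and $\zeta$ is actually verified (even for a single $N$ it would upgrade the conjecture to a theorem for that $N$, which would already exceed what the paper establishes), the proposal is a promising reformulation rather than a proof.
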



If the \cref{conj:inv-area-sum} holds then:

\begin{corollary}
$\frac{1}{\sum{\sin(2\theta_{i,1})}}+\frac{1}{\sum{\sin(2\theta_{i,2})}}$ is invariant.
\end{corollary}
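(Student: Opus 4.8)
The plan is to mirror the argument used for the preceding corollary on $\frac{1}{\sum s_{i,1}^2}+\frac{1}{\sum s_{i,2}^2}$, replacing the relation $\sum s_i^2=4A\cot\omega$ by the conjectured invariance of $\frac{\sum\sin(2\theta_i)}{A}$. First I would note that each of the two lateral families is itself a harmonic Poncelet family, being the polar image of $\H$ with respect to a focus of $\E_h$. Granting the conjecture that $\frac{\sum\sin(2\theta_i)}{A}$ is constant along any harmonic family and applying it to each lateral family, I obtain constants $\kappa_1,\kappa_2$ with
\[
\sum_i \sin(2\theta_{i,1}) = \kappa_1 A_1, \qquad \sum_i \sin(2\theta_{i,2}) = \kappa_2 A_2,
\]
each $\kappa_j$ being independent of the position of the polygon along the Poncelet motion.

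Next I would show $\kappa_1=\kappa_2$. The foci $f_{h,1},f_{h,2}$ of $\E_h$ are symmetric about the common center of $\E_H$ and $\E_h$, and reflection through that center carries $\H$ to itself; hence the two lateral harmonic families are mirror images of one another and therefore congruent. Since internal angles and areas are isometry invariants, the family-invariant ratio $\frac{\sum\sin(2\theta_i)}{A}$ attains the same value on both families, giving $\kappa_1=\kappa_2=:\kappa$. This is precisely the symmetry observation already used to equate the Brocard angles $\omega$ of the two families in the previous corollary.

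Substituting these relations then yields
\[
\frac{1}{\sum_i\sin(2\theta_{i,1})}+\frac{1}{\sum_i\sin(2\theta_{i,2})}
=\frac{1}{\kappa}\left(\frac{1}{A_1}+\frac{1}{A_2}\right),
\]
and the right-hand side is invariant because $\kappa$ is a constant and $\frac{1}{A_1}+\frac{1}{A_2}$ is invariant by \cref{conj:inv-area-sum}. The statement is thus doubly conditional --- on \cref{conj:inv-area-sum} and on the conjectural invariance of $\frac{\sum\sin(2\theta_i)}{A}$ --- and the only genuinely new ingredient beyond these is the identity $\kappa_1=\kappa_2$, which I expect to be the main point to pin down and which reduces to the central symmetry of the homothetic pair together with its foci.
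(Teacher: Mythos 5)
Your argument is correct and is essentially the paper's own (implicit) reasoning: it mirrors the justification of the preceding corollary, replacing the identity $\cot\omega=\sum s_i^2/(4A)$ by the conjectured invariance of $\sum\sin(2\theta_i)/A$, using the central symmetry of the homothetic pair to equate the proportionality constants of the two lateral families, and then invoking \cref{conj:inv-area-sum}. You also correctly flag that the result is conditional on both conjectures, which matches the paper's framing.
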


\subsection{Closing the loop}

Let $\R$, $\P$, $\H$ be as above. Referring to \cref{fig:3-fams}, below we specify transformations which interchange families in the triad. Below let $\P(N,\omega)$ denote a family of harmonic $N$-gons with Brocard angle $\omega$.

\begin{figure}
    \centering
    \includegraphics[trim=75 0 125 0,clip,width=\textwidth,frame]{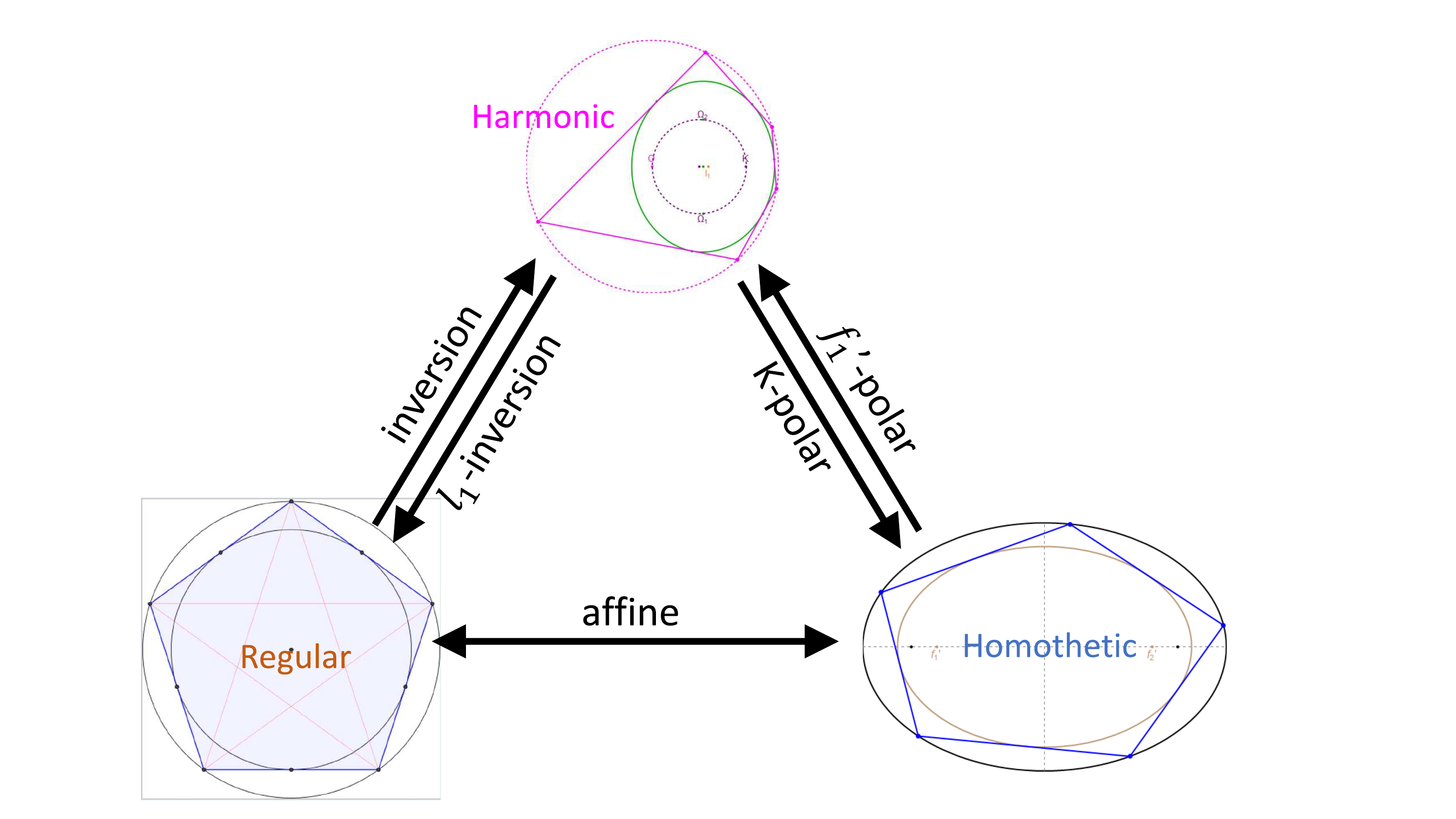}
    \caption{The three families mentioned in this article  are inversive, affine, or polar images of each other. Note that the inversive relationship between regular and harmonic is equivalent to the projection of \cref{fig:harm-proj}.}
    \label{fig:3-fams}
\end{figure}

\begin{proposition}
The inversive image of $\R$ with respect to a unit circle centered on $(x_0,0)$ is $P(N,\omega)$ if $x_0= \sqrt{\frac{1-\tan\alpha \tan\omega}{1+\tan\alpha \tan\omega}}$.
\end{proposition}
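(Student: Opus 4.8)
The plan is to apply the inversion explicitly to the regular family, reduce the claim to computing the Brocard angle of the image as a function of $x_0$ and $\alpha$, and then invert that relation. Write the inversion in the unit circle centered at $(x_0,0)$ as the anti-Möbius map $\iota(z)=x_0+\tfrac{1}{\bar z-x_0}$, and let $w_k=\iota(z_k)$ be the images of the regular vertices $z_k=e^{i(2\alpha k+t)}$. Since an inversion sends circles to circles, the $w_k$ lie on the fixed circle $\iota(C)$ (fixed as $t$ varies), which will be the circumcircle of the candidate harmonic polygon.

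First I would record that the image is indeed harmonic: one may either invoke the inversive characterization of harmonic polygons, of which the Casey construction used in \cref{sec:conserved} is an instance, or verify directly via the complex cross-ratio that $\iota(\R)$ carries a symmedian point, i.e. a point whose distances to the sidelines are proportional to the sidelengths. Because the Brocard angle $\omega$ is a similarity invariant, it is unaffected by which particular circle the family is inscribed in, so it suffices to compute $\omega$ for the explicit vertices $w_k$.

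Next I would compute $\omega$ from the identity $\cot\omega=\sum_k s_k^2/(4A)$ quoted above, where $s_k$ are the sidelengths and $A$ the area. A short computation gives
\[
w_{k+1}-w_k=\frac{\bar z_k-\bar z_{k+1}}{(\bar z_{k+1}-x_0)(\bar z_k-x_0)},\qquad s_k^2=\frac{4\sin^2\alpha}{|z_k-x_0|^2\,|z_{k+1}-x_0|^2},
\]
using $|z_{k+1}-z_k|=2\sin\alpha$ and $|z_k-x_0|^2=1+x_0^2-2x_0\cos(2\alpha k+t)$. With these, and the usual expression $A=\tfrac12\sum_k\operatorname{Im}(\bar w_k w_{k+1})$, I would evaluate the ratio $\sum_k s_k^2/(4A)$ with a CAS, exactly as the summations in the previous propositions were handled through the orthogonality relations for $\cos(2\alpha k+t)$. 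The outcome I expect is that the ratio is independent of $t$ (as it must be for a genuine Brocard angle) and equals
\[
\cot\omega=\tan\alpha\,\frac{1+x_0^2}{1-x_0^2}.
\]

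Finally, solving this relation for $x_0^2$ yields $x_0^2=\dfrac{\cot\omega\cot\alpha-1}{\cot\omega\cot\alpha+1}=\dfrac{1-\tan\alpha\tan\omega}{1+\tan\alpha\tan\omega}$, which is the claimed formula. Two checks guard the computation: at $x_0=0$ the map $\iota$ fixes $C$ pointwise, so the image is the regular polygon itself and the formula collapses to $\cot\omega=\tan\alpha$, the Brocard angle of a regular $N$-gon; and for $N=3$ the right-hand side agrees with $\sum_k\cot\theta_k=3\rho\cot(2\alpha)$ from \cref{lem:chasing}, since for triangles $\cot\omega=\sum_k\cot\theta_k$. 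The main obstacle is this third step: obtaining the closed form of $\sum_k s_k^2/(4A)$ and confirming its $t$-independence, since neither $\sum_k s_k^2$ nor $A$ is individually invariant over the Poncelet family—only their ratio is—so the cancellation of the $t$-dependent terms is the crux, and it is precisely the part most naturally delegated to the CAS.
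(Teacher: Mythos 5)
Your proposal is correct, and it reaches the stated formula, but it travels a different road than the paper does. The paper omits an explicit proof (the authors state these propositions were obtained by CAS), yet its Appendix B effectively contains one: from the explicit inverted vertices it extracts the circumcircle $(O,R)$ and symmedian point $K$ of the image family, computes $\delta/R=2|x_0|/(1+x_0^2)$, and feeds this into Casey's relation $\tan\omega=\sqrt{1-(\delta/R)^2}\,\cot\alpha$ to get $\tan\omega=\frac{|1-x_0^2|}{1+x_0^2}\cot\alpha$, which inverts to the claimed $x_0$. You instead compute $\omega$ from the Casey--Simmons identity $\cot\omega=\sum s_i^2/(4A)$ applied to the explicit image vertices. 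Both routes must first establish that the inversive image is harmonic (otherwise neither ``the Brocard angle'' nor the respective identity applies), and both lean on a classical identity plus a symbolic computation. The trade-off is that the paper's route is computationally lighter and sidesteps your acknowledged crux: $K$, $O$, $R$ are stationary over the porism, so no $t$-dependence ever enters, whereas your ratio $\sum s_i^2/(4A)$ is a quotient of two individually non-invariant, $t$-dependent sums whose cancellation must be verified (your reduction of $s_k^2$ to $4\sin^2\alpha/(|z_k-x_0|^2|z_{k+1}-x_0|^2)$ is correct and the cosine-orthogonality sums handle the numerator as in Section 2, but the shoelace area in the denominator is messier). Your sanity checks are sound --- note only that the $N=3$ comparison with \cref{lem:chasing} tacitly identifies the second-intersection parameter $d$ with the inversion parameter $x_0$; these are different constructions, though they happen to produce the same Brocard angle, so the check is legitimate. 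Also, you correctly use the normalization $\cot\omega=\sum s_i^2/(4A)$ from the body of the paper rather than the version without the factor $4$ displayed in \cref{app:review}.
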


Let $\H$ be a family of $N$-gons which is an affine image of the $\R$, where $(x,y) \to (k x, y)$. Clearly, $\H$ is bounded by two homothetic, concentric ellipses $\E,\E'$ where $\E=(k,1)$ and, using the geometry of regular polygons, $\E'=(k \cos\alpha,\cos\alpha)$.

\begin{proposition}
The polar image of $\H$ with respect to a focus of $\E'$ will be $\P(N,\omega)$ if $k = \cot\alpha \cot\omega$.
\end{proposition}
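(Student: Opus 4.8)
The plan is to reduce the statement to the earlier ``homothetics back to harmonics'' proposition together with its corollary, and then pin down the Brocard angle of the resulting family. First I would record that the stretch $(x,y)\mapsto(kx,y)$ carries $\R$ (interscribed between the unit circle and the concentric circle of radius $\cos\alpha$) to the homothetic family $\H$ with outer ellipse $\E=(k,1)$ and inner ellipse $\E'=(k\cos\alpha,\cos\alpha)=\cos\alpha\cdot(k,1)$. Since $\cos\alpha<1$, $\E'$ is indeed the inner ellipse, and for $k>1$ its major axis lies along the $x$-axis, so its foci are real and on that axis. Identifying $\E'$ with the inner ellipse $\E_h=(a_h,b_h)$ of the previous proposition, i.e. setting $a_h=k\cos\alpha$ and $b_h=\cos\alpha$, that proposition immediately yields that the polar image of $\H$ with respect to a focus of $\E'$ is a harmonic family and supplies its circumcircle $(O_1,R_1)$, symmedian $K_1$, and Brocard inellipse.

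It remains to identify this family with $\P(N,\omega)$, for which it suffices to match the Brocard angle (since $N$ together with $\omega$ determines a harmonic family up to similarity, which is implicit in the notation $\P(N,\omega)$ and underlies the preceding propositions). I would extract $\omega$ through the similarity-invariant ratio $\delta/R_1$, where $\delta=|K_1-O_1|$. The corollary already gives
\[
\left(\frac{\delta}{R_1}\right)^2 = 1-\left(\frac{b_h}{a_h}\right)^2 = 1-\frac{1}{k^2},
\]
using $b_h/a_h=1/k$. On the other hand, the inversive proposition realizes the \emph{same} harmonic family as the inversion of $\R$ about a real center $d=x_0$ with $x_0^2=\frac{1-\tan\alpha\tan\omega}{1+\tan\alpha\tan\omega}$, equivalently $\tan\alpha\tan\omega=\frac{1-d^2}{1+d^2}$. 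In this unit-circle model ($R=1$) the circumcenter is the origin and the two isodynamic points sit at $d$ and $d^{-1}$; locating the symmedian point $K$ at their harmonic mean $\frac{2d}{1+d^2}$ gives $\delta/R=\frac{2d}{1+d^2}$, whence
\[
\left(\frac{\delta}{R}\right)^2 = \frac{4d^2}{(1+d^2)^2} = 1-\left(\frac{1-d^2}{1+d^2}\right)^2 = 1-\tan^2\alpha\,\tan^2\omega .
\]
Comparing the two expressions for $(\delta/R)^2$ forces $\tan\alpha\tan\omega=1/k$, i.e. $k=\cot\alpha\cot\omega$, as claimed; the degenerate value $k=1$ correctly recovers the regular polygon, where $\omega=\pi/2-\alpha$ and $K=O$.

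The main obstacle is the step locating the symmedian point, i.e. verifying $\delta/R=\frac{2d}{1+d^2}$ in the Section~\ref{sec:conserved} model; this is the one place where a genuinely new computation is required, and it can be carried out either classically (the circumcenter, symmedian, and the two isodynamic points are collinear on the Brocard axis, with $K$ the harmonic mean of the isodynamic points) or, as done elsewhere in the paper, by a direct CAS evaluation. An equivalent route that bypasses $K$ is to compute $\omega$ for the output family directly from the data of the ``homothetics back to harmonics'' proposition via $\cot\omega=\sum s_i^2/(4A)$; either way the remaining work is the algebraic comparison above. A secondary point to watch is the sign and range conventions: one must check $k>1$ (equivalently $0<\omega<\pi/2-\alpha$) so that $\E'$ has real foci, and confirm that the chosen focus produces $\P(N,\omega)$ rather than a reflected, out-of-phase copy.
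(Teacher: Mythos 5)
Your argument is correct, and it is genuinely different from what the paper does: for the propositions in \cref{sec:homothetics} the authors explicitly omit proofs, stating they were obtained by CAS, whereas you give an actual derivation by chaining the earlier propositions through the similarity invariant $\delta/R$. Your chain checks out: identifying $\E'$ with $\E_h=(k\cos\alpha,\cos\alpha)$ and applying the ``homothetics back to harmonics'' corollary gives $(\delta/R_1)^2=1-1/k^2$; your formula $\delta/R=2d/(1+d^2)$ in the unit-circle model is confirmed independently by the explicit data in \cref{app:harm} (there $\delta/R=2x_0/(1+x_0^2)$, equivalently the displayed inversion $x_0=(1\pm\sqrt{1-(\delta/R)^2})/(\delta/R)$), and it does follow classically from the two facts the paper records, namely that the Brocard circle has diameter $OK$ and that the limiting points of the Schoute pencil are inverse with respect to every circle of the pencil --- though your phrase ``harmonic mean of the isodynamic points'' describes the coordinate of $K$ only in the normalization where the circumcircle is the unit circle centered at the origin, so it deserves the one-line computation rather than being cited as a known fact. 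One remark: the detour through the inversive-center proposition is avoidable, since Casey's relation $\tan\omega=\sqrt{1-(\delta/R)^2}\,\cot\alpha$ quoted in \cref{app:harm} combines directly with $(\delta/R_1)^2=1-(b_h/a_h)^2$ to give $\tan\omega=(b_h/a_h)\cot\alpha=\cot\alpha/k$, i.e.\ $k=\cot\alpha\cot\omega$ in two lines; this shortcut also makes your range caveat transparent ($0<\omega<\pi/2-\alpha$ iff $k>1$ iff $\E'$ has real foci on the $x$-axis). What your approach buys over the paper's is a human-checkable proof that exposes \emph{why} the constant is $\cot\alpha\cot\omega$; what the CAS route buys is immunity to the normalization bookkeeping between the \cref{sec:conserved} model and the appendix model, which is the only place your write-up requires care.
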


Let $a,b$ denote the semiaxes of the the inner ellipse in a Poncelet homothetic family $\H$.

\begin{proposition}
The polar image of $\H$ with respect to an internal focus will be identical to the inversive image of $\R$ with respect to a unit circle centered on $(x_0,0)$ if
$x_0 = \sqrt{\frac{a+b}{a-b}}$.
\end{proposition}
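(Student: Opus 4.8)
\emph{Strategy.} Both constructions are already known, by the two preceding propositions, to produce harmonic families, so what must be shown is that these two \emph{positioned} families are one and the same. A harmonic family is pinned down by its circumcircle together with its symmedian point (equivalently, by the Poncelet pair formed by the circumcircle and the inscribed Brocard ellipse). Hence the plan is to write this data for each side as an explicit function of $(a,b,\alpha)$ and $x_0$, and to equate. For the polar image of $\H$ about the internal focus the data is already in hand from the homothetics-to-harmonics proposition, which supplies its circumcircle $\C_1=(O_1,R_1)$, its inscribed ellipse $\E_1=(a_1,b_1)$ centered at $(x_1,0)$, and the fact that its symmedian $K_1$ is the focus $f_h$ used for the polar.

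\emph{The inversion side.} Inversion of the circumcircle of $\R$, the unit circle at the origin, about the unit circle at $(x_0,0)$ is again a circle, centered on the $x$-axis with radius $1/|1-x_0^2|$; the vertices $z_k=e^{i(2\alpha k+t)}$ invert to the vertices of the harmonic image inscribed in it. The Schoute pencil of this image is the inversive image of the degenerate pencil of the regular polygon, whose limiting points are the center and the point at infinity; its limiting points are therefore $\iota(\infty)=(x_0,0)$ and $\iota(0)$, and the symmedian is the internal of these two. Equating this circumcircle with $\C_1$ and the symmedian with $K_1=f_h$ gives a system in the single unknown $x_0$; solving it yields $x_0=\sqrt{(a+b)/(a-b)}$. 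Once the two harmonic families are seen to share a circumcircle and symmedian---hence a common circumconic and inconic---Poncelet's closure theorem forces them to contain exactly the same polygons, which is the assertion.

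\emph{Main obstacle.} The difficulty is entirely in promoting ``same family type'' to ``identical family,'' and it shows up as a branch choice. A quick scale-free check makes this visible: from $\E_h=(a,b)=(k\cos\alpha,\cos\alpha)$ one reads $k=a/b$, the homothetics-to-harmonics proposition gives $k=\cot\alpha\cot\omega$, whence $\tan\alpha\,\tan\omega=b/a$; substituting into $x_0=\sqrt{(1-\tan\alpha\tan\omega)/(1+\tan\alpha\tan\omega)}$ returns the \emph{reciprocal} $\sqrt{(a-b)/(a+b)}$ of the stated value. This is expected, because matching the Brocard angle fixes the family only up to similarity and is blind to the relative normalization of the two unit-circle constructions; it determines $x_0$ only up to the Schoute involution $x_0\leftrightarrow 1/x_0$, i.e. up to interchanging the internal and external limiting points $d,d^{-1}$. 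The real work, and the only step that is not routine trigonometry, is therefore the position-and-scale bookkeeping that selects the external limiting point and thereby the stated root. Since the circumcircle, symmedian, and inellipse of both sides are rational-trigonometric in $a,b,\alpha$, this is a finite elimination, naturally carried out with CAS as the paper indicates.
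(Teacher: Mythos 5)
The paper offers no proof of this proposition---the authors state at the start of \cref{sec:homothetics} that the proofs there were obtained by CAS and omitted---so your proposal can only be judged on its own terms. Its sound core is the scale-free computation: from $\E'=(k\cos\alpha,\cos\alpha)$ one reads $a/b=k=\cot\alpha\cot\omega$, and combining this with $\tan\omega=\tfrac{|1-x_0^2|}{1+x_0^2}\cot\alpha$ (equivalently, with the proposition on the inversive image of $\R$) gives $a/b=\tfrac{1+x_0^2}{|1-x_0^2|}$, whose two roots are $x_0^2=(a+b)/(a-b)$ and its reciprocal. You are right that this relation is the actual content of the statement, and right that the paper's own preceding proposition, taken at face value, returns the reciprocal root; the two are exchanged by the involution $x_0\leftrightarrow 1/x_0$, which preserves the Brocard angle and hence the similarity class of the harmonic family.

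The gap is in your claimed resolution of the branch ambiguity. First, the system ``circumcircle of the inversive image $=\C_1$, symmedian $=K_1=f_h$'' is generically inconsistent: the polar image has circumradius $a/b^2$, while the inversive image has circumradius $1/|x_0^2-1|=(a\mp b)/(2b)$ for the two roots---neither matches, so the two positioned families are similar but not congruent and no $x_0$ solves your equations; the scale-free residue $\delta/R=2x_0/(1+x_0^2)$ is itself invariant under the involution and so cannot select a root either. Second, you identify the symmedian of the inversive image with the limiting point $\iota(\infty)=(x_0,0)$ of its Schoute pencil; but the symmedian is not a limiting point ($K=[x_0^3/(x_0^2+1),0]$ versus $\ell_1=[x_0,0]$ in \cref{app:harm}), so the quantity you propose to equate with $f_h$ is the wrong one. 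The statement should instead be read, as the neighboring propositions' use of $\P(N,\omega)$ indicates, up to similarity: then your scale-free computation already proves it, with the stated formula merely selecting the branch $x_0>1$ (consistently, substituting $a=(x_0^2+1)^2/|1-x_0^2|$, $b=x_0^2+1$ from the first proposition of this section into $\sqrt{(a+b)/(a-b)}$ returns $x_0$ exactly when $x_0>1$, and $1/x_0$ otherwise).
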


\section{Isocurves of Brocard angle}
\label{sec:isobrocs}
Let $P$ be a polygon in a Poncelet family of harmonic $N$-gons, and $P'$ be another $N$-gon whose vertices are inversions of those of $\P$ with respect to a circle centered at some point $Q$. Recall the Schoute pencil $\S$ of a harmonic polygon is the one containing both circumcircle and the Brocard circle. Johnson 
\cite{johnson17-schoute} shows that for the $N=3$ case, the locus of $Q$ such that the Brocard angle of $P'$ is constant are individual circles in $\S$. Referring to \cref{fig:harm-pencil}, sufficient experimental evidence suggests:

\begin{conjecture}
The locus of $Q$ such that the Brocard angle of $P'$ is constant are individual circles in $\S$. Furthermore, if $Q$ is on the Lemoine axis or Brocard circle, the Brocard angles of both $P$ and $P'$ are equal.
\label{conj:brocard}
\end{conjecture}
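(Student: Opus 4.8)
The plan is to reduce the (generalized) Brocard angle of $P'$ to an explicit function of the inversion centre $Q$ and then to recognise its level sets as members of the Schoute pencil $\S$. Since $P'$ is an inversion of a harmonic polygon it is in general \emph{not} harmonic, so I first fix the working definition $\cot\omega=\sum_i s_i^2/(4A)$ (Simmons) as a quantity attached to \emph{any} cyclic polygon and rewrite it using only the angular positions of the vertices on the circumcircle. If the polygon is inscribed in a circle of radius $R$ and consecutive vertices subtend arcs $\Delta_k$ (so $\sum_k\Delta_k=2\pi$), then $s_k^2=2R^2(1-\cos\Delta_k)$ and $A=\tfrac{R^2}{2}\sum_k\sin\Delta_k$, whence
\[
\cot\omega=\frac{N-\sum_k\cos\Delta_k}{\sum_k\sin\Delta_k}=\frac{N-\operatorname{Re}\Sigma}{\operatorname{Im}\Sigma},\qquad \Sigma=\sum_{k}\zeta_k\overline{\zeta_{k-1}},
\]
where $\zeta_k$ is the unit vector from the circumcentre to the $k$-th vertex. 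This is manifestly a similarity invariant depending only on the cyclic angular data of the vertices on their common circle — exactly the data an inversion acts on, since inversion preserves circles. In particular the radius of the inversion used to form $P'$ is irrelevant; only its centre $Q$ matters.

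Next I would make this concrete. As $P'$ is inscribed in $C'=\iota_Q(C)$ with $\iota_Q(z)=Q+1/(\overline{z}-\overline{Q})$, I would use the standard formulas for the centre $O'$ and radius $R'$ of $C'$, set $\zeta_k=(\iota_Q(w_k)-O')/R'$, and compute $\Sigma(Q)=\sum_k\zeta_k\overline{\zeta_{k-1}}$ after substituting the harmonic parametrisation $w_k=(d\overline{z}_k-1)/(\overline{z}_k-d)$, $z_k=e^{i(2\alpha k+t)}$. The evaluation would rely on precisely the trigonometric summation identities already used in \cref{sec:conserved} and \cref{sec:cotangents} (the vanishing of $\sum_k e^{im(2\alpha k+t)}$ for $0<|m|<N$, together with product-to-sum reductions, cf.\ \cite{knapp2009}), which collapse the $t$-dependence and should leave a closed form for $\Sigma$.

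The main claim, and the main obstacle, is the reduction in Part 1: I must show that $\cot\omega'(Q)=(N-\operatorname{Re}\Sigma(Q))/\operatorname{Im}\Sigma(Q)$ depends on $Q$ only through the coaxal parameter $m(Q)=|Q-d|^2/|Q-d^{-1}|^2$. Since the members of $\S$ are precisely the Apollonius circles of the limiting points $d,d^{-1}$, i.e. the level sets of $m$, this identifies the isocurves of constant Brocard angle with individual circles of $\S$. Concretely one must prove that, after the inversion substitution, \emph{all} dependence of $\Sigma$ on the argument of $Q$ relative to the real axis cancels between numerator and denominator, leaving a function $G$ of the single real parameter $m$. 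I expect this to be the one genuinely computational step, with the generality in $N$ coming out of the summation identities rather than from any structural symmetry (the only (anti)conformal self-map fixing the $w_k$ is inversion in $C$, which is not enough to force invariance along a whole pencil circle).

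Granting Part 1, the ``Furthermore'' clause should follow cleanly from two anchors. First, inverting from the circumcentre $O$ (the origin) sends each vertex to a scalar multiple of itself, so $P'\sim P$ and $\omega'(O)=\omega$; as the Brocard circle has diameter $OK$ it passes through $O$ and is the unique member of $\S$ through $O$, namely $\{m=m(O)\}=\{m=d^4\}$, so $\cot\omega'\equiv\cot\omega$ on it. Second, let $\sigma(z)=1/\overline{z}$ be inversion in the circumcircle; it fixes $P$ pointwise. Writing $\Phi=\iota_{\sigma(Q)}\circ\sigma\circ\iota_Q$ one checks $\iota_{\sigma(Q)}(P)=\Phi(\iota_Q(P))$ and $\Phi(\infty)=\infty$ (indeed $\iota_Q(\infty)=Q$, then $\sigma(Q)=\sigma(Q)$, then $\iota_{\sigma(Q)}(\sigma(Q))=\infty$); being anti-conformal and fixing $\infty$, $\Phi$ is an orientation-reversing similarity, so $\iota_{\sigma(Q)}(P)$ is a mirror image of $\iota_Q(P)$ and (with Brocard angle taken acute) the two share the same $\omega'$. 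A short computation gives $m(\sigma(Q))=d^4/m(Q)$, so the functional symmetry $G(m)=G(d^4/m)$ propagates the value $\cot\omega$ from the fibre $\{m=d^4\}$ (the Brocard circle) to the fibre $\{m=1\}$, which is the radical axis of $\S$, i.e. the Lemoine axis. This same symmetry also explains why a generic Brocard level is realised by two circles of $\S$, while the level $\omega$ is realised by the Lemoine axis and the Brocard circle simultaneously.
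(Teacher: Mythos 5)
First, a point of calibration: the paper does not prove this statement --- it is stated as \cref{conj:brocard}, supported only by numerical experiments and by Johnson's result for $N=3$. So there is no ``paper proof'' to match; the question is whether your argument closes the conjecture, and it does not.

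Your reduction of $\cot\omega'$ to the cyclic angular data $(N-\operatorname{Re}\Sigma)/\operatorname{Im}\Sigma$ is correct and is a sensible normalization, and your identification of the circles of $\S$ with the level sets of $m(Q)=|Q-d|^2/|Q-d^{-1}|^2$ is the right way to phrase the target. But the entire content of the conjecture is your ``Part 1'': that after substituting $\iota_Q(w_k)$ and the centre/radius of $\iota_Q(C)$, the quantity $\Sigma(Q)$ (or at least the ratio $(N-\operatorname{Re}\Sigma)/\operatorname{Im}\Sigma$) depends on $Q$ only through $m(Q)$ and is independent of $t$. You explicitly defer this as ``the one genuinely computational step'' that you ``expect'' to work. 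Nothing in the proposal forces the angular dependence of $Q$ about the axis $\overline{d\,d^{-1}}$ to cancel, and the summation identities you invoke only kill the $t$-dependence, not the $\arg$-of-$Q$ dependence; these are different variables. As written, the proposal is a plan with the central claim asserted rather than proved, so the conjecture remains open. A secondary gap: you adopt $\cot\omega=\sum s_i^2/(4A)$ as a \emph{definition} for arbitrary cyclic polygons, but the conjecture concerns the genuine Brocard angle of $P'$ (defined via concurrence of rotated sides). To use Simmons' identity you should first note that $P'$ is itself harmonic --- e.g.\ because harmonic polygons are exactly the (anti-)M\"obius images of regular polygons inscribed in circles, and $\iota_Q$ composed with the defining map $\bar z\mapsto(d\bar z-1)/(\bar z-d)$ is again such a map --- otherwise you may be proving a statement about a different invariant. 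You should also say what happens when $Q$ lies on the circumcircle (then $P'$ degenerates to a collinear set and $\omega'$ is undefined), since the circumcircle is itself a member of $\S$.

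The ``Furthermore'' clause is the strong part of your write-up, and it is correct \emph{conditional on Part 1}: the anchor $\omega'(O)=\omega$ at the circumcentre, the identification of the Brocard circle as the unique pencil member through $O$ with $m=d^4$, the conjugation $\Phi=\iota_{\sigma(Q)}\circ\sigma\circ\iota_Q$ fixing $\infty$ and hence acting as an orientation-reversing similarity, and the computation $m(\sigma(Q))=d^4/m(Q)$ carrying the value from $\{m=d^4\}$ to the Lemoine axis $\{m=1\}$ all check out. This is a genuinely nice structural observation (it also explains the pairing of pencil circles with equal Brocard angle), and it would reduce the second sentence of the conjecture to the first. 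But without Part 1 the argument only establishes $\omega'$ at the two isolated points $O$ and $\sigma$-images of $O$, not along the whole Brocard circle or Lemoine axis.
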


\begin{figure}
    \centering
    \includegraphics[width=\textwidth]{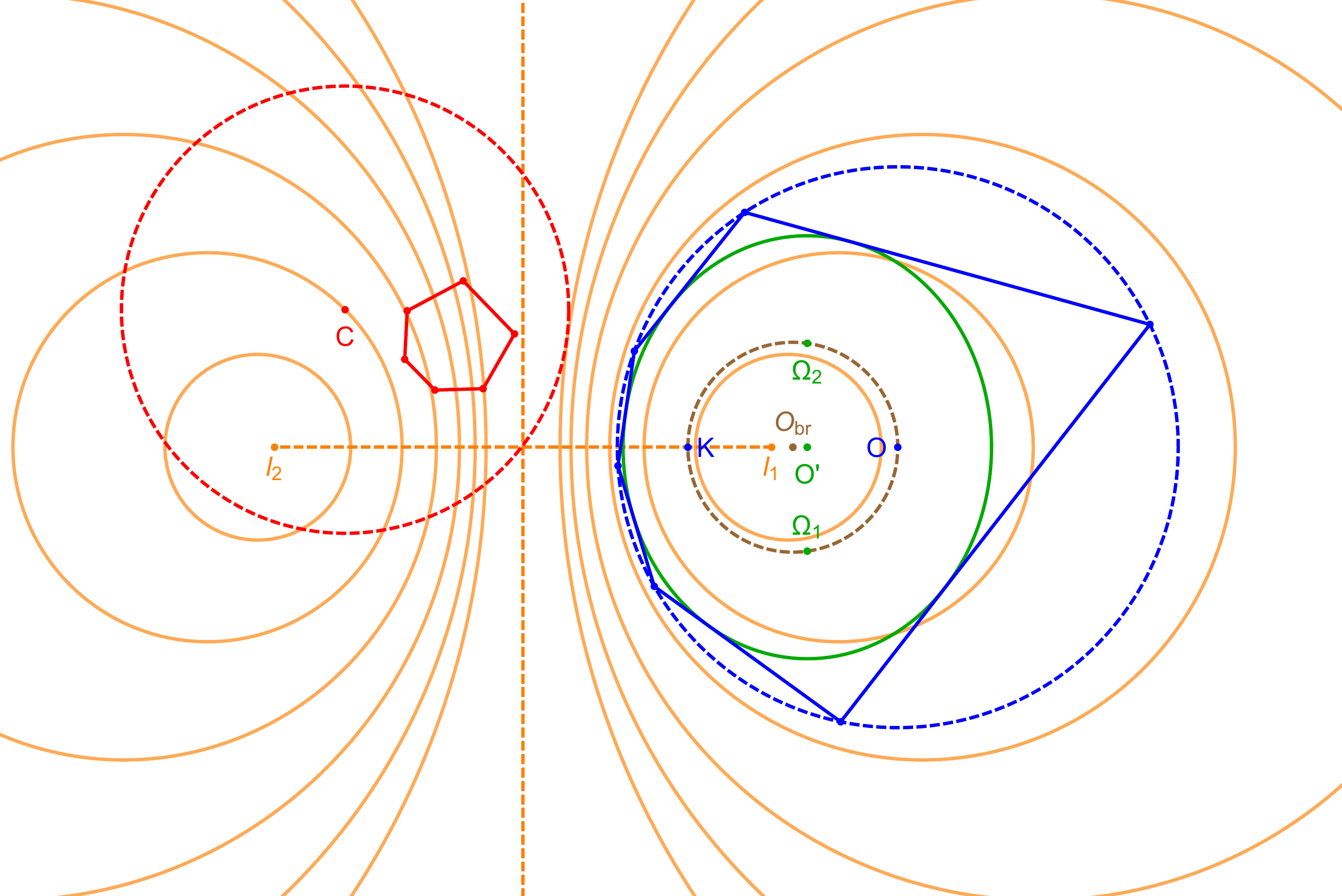}
    \caption{\textbf{Isocurves of Brocard angle}: a harmonic polygon (blue) is shown as well as a few circles (orange) in its Schoute pencil, containing the circumcircle (dashed blue) and the Brocard circle (dashed brown). \cref{conj:brocard} states that these circles are isocurves for centers of inversion $C$ such that the Brocard angle $\omega'$ of the $C$-inversive polygon (red) is constant. If $C$ is on the Brocard circle or Lemoine line (vertical dashed orange), $\omega'$ is equal to the Brocard angle $\omega$ of the reference harmonic polygon.}
    \label{fig:harm-pencil}
\end{figure}

\section{Videos}
\label{sec:videos}
Animations illustrating some invariant phenomena herein are listed on Table~\ref{tab:playlist}.

\begin{table}[H]
\small
\begin{tabular}{|c|c|l|l|}
\hline
id & N & Title & \textbf{youtu.be/<.>}\\
\hline
01 & 5 & {Invariants of $\F$} &
\href{https://youtu.be/2PdsC3CcqaE}{\texttt{2PdsC3CcqaE}}\\
02 & 3 & {Invariant Brocard Angles over $F$ 3-gons} & \href{https://youtu.be/2fvGd8wioZY}{\texttt{2fvGd8wioZY}} \\
03 & 3 & {Locus of Brocard Points $\F$ 3-gons} & \href{https://youtu.be/13i3JGY-fK4}{\texttt{13i3JGY-fK4}}\\
04 & 5 & {Invariant signed area of Evolute Polygon, $s=\{.25,.5,.75,.1\}$} & \href{https://youtu.be/JCj0q7_hlA8}{\texttt{JCj0q7\_hlA8}} \\
05 & 3,5,6,8 & {Evolute Polygons with Zero Signed Area} & \href{https://youtu.be/3nvXYFoI5Wg}{\texttt{3nvXYFoI5Wg}} \\
06 & 5 & {Invariant-Area Evolute Polygon with $s=1$} & \href{https://youtu.be/ChsfLzKrb4o}{\texttt{ChsfLzKrb4o}} \\
07 & 3 & {Zero-area Evolute Polygon is a horizontal segment} & \href{https://youtu.be/f80QaYs5_J4}{\texttt{f80QaYs5\_J4}} \\
08 & 3 & {Two zero-area evolute polygons intersect on $X_{76}$} & \href{https://youtu.be/OFA_j25R8ks}{\texttt{OFA\_j25R8ks}} \\
\hline
\end{tabular}
\caption{Illustrative videos. The last column is clickable and provides the YouTube code.}
\label{tab:playlist}
\end{table}

\section*{Acknowledgements}
\noindent We would like to thank A. Akopyan for valuable discussions, and the anonymous referee for a meticulous review and corrections. The first author is fellow of CNPq and coordinator of Project PRONEX/CNPq/FAPEG 2017 10 26 7000 508.

\appendix

\section{Review: Harmonic polygons}
\label{app:review}
As shown in \cref{fig:harm-proj}, a harmonic polygon is the projective image of vertices of a regular $N$-gon; specifically, that corresponding vertices are harmonic conjugates with respect to a projective center $S$ and an axis $\sigma$ \cite{simmons1886,tarry1887}.

\begin{figure}
    \centering
    \includegraphics[width=\textwidth]{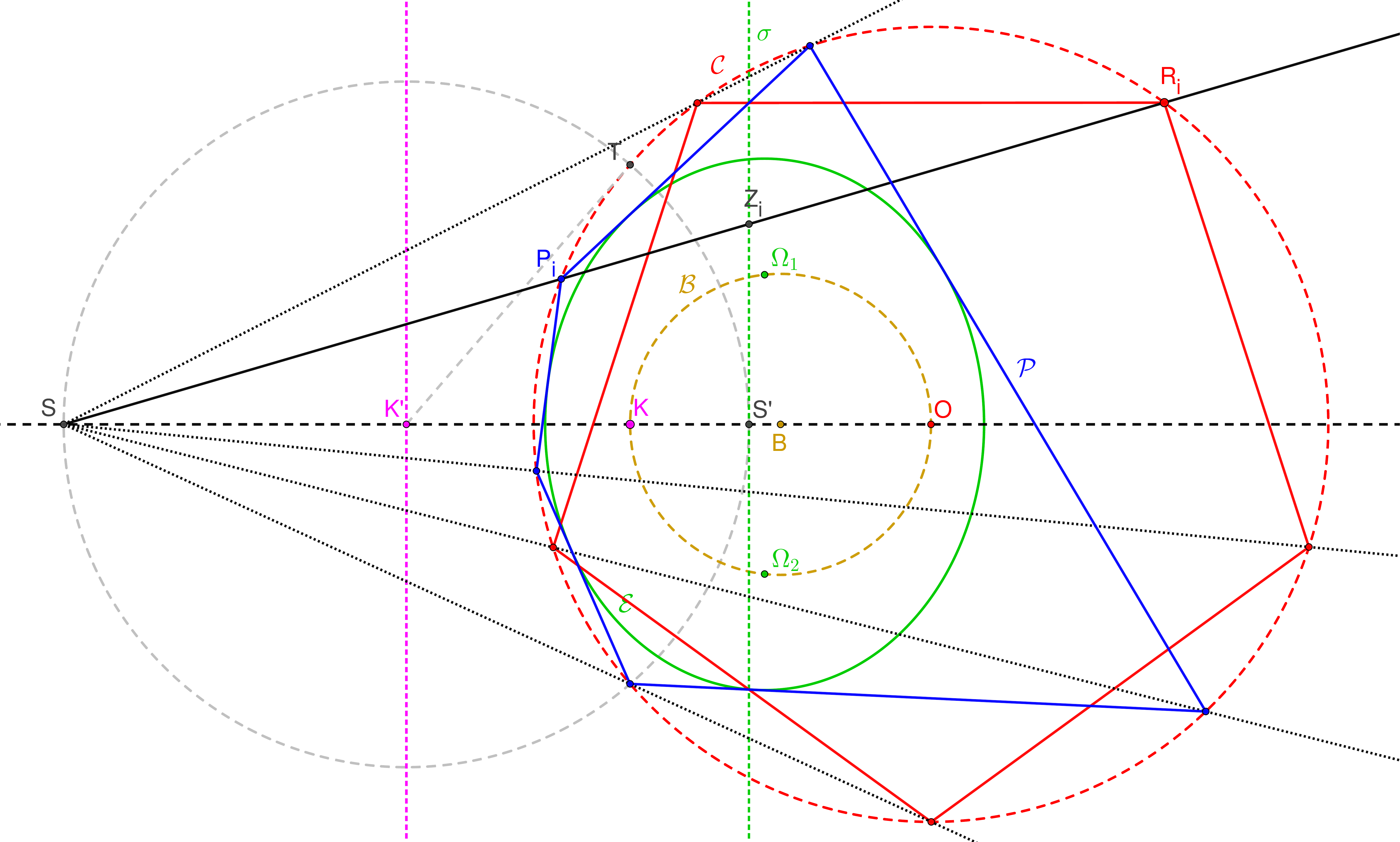}
    \caption{\textbf{Harmonics via projection}: Let $\R$ be a regular polygon (red) and $\C$ its circumcircle (dashed red) centered on $O$. A harmonic polygon $\P$ (blue) can be obtained as follows \cite{simmons1886,tarry1887}: (i) choose a point $K$ -- the symmedian point -- in the interior of $\C$; (ii) $K'$ is the intersection of $OK$ with the polar of $K$ (dashed magenta) with respect to $\C$; (iv) let $T$ be a tangent to $\C$ from $K'$; (v) let $S,S'$ be points on $OK$ which lie on the circle (dashed gray) centered on $K'$ and passing through $T$; (vi) for every regular vertex $R_i$, $P_i$ is at the intersection of $S R_i$ with $\C$. As it turns out, $(P_i,R_i)$ are harmonic conjugates with respect to $S$ and $Z_i$, where $Z_i$ is the intersection of $S R_i$ with the ``projection axis'' $\sigma$ through $S'$ and perpendicular to $OK$. Also shown in the Brocard inellipse (green) $\E$ which $\P$ circumscribes. Its foci $\Omega_1$ and $\Omega_2$ are known as the Brocard points. Also shown is the Brocard circle $\B$  (dashed brown) passing $K$, $O$, $\Omega_1$, and $\Omega_2$. Since $\P$ is interscribed in $\C,\E$, it triggers a (Poncelet) porism over which the Brocard points and Brocard circle remain stationary. Note: it can be shown $S$ and $S'$ are the limiting points of the pencil containing $\C$ and $\B$ and the polar of $K$ wrt $\C$ is the Lemoine axis of $\P$.}
    \label{fig:harm-proj}
\end{figure}

In another construction, it is regarded as the inversive image of vertices of a regular polygon with respect to a chosen inversion center \cite{casey1888}, see \cref{fig:reg-inv}. In yet another construction, it is simply a generic projection of any Ponceletian family \cite{sharp45}, though in this case is not interested in Euclidean properties specific to the case where the outer conic is a circle.

\begin{figure}
    \centering
    \includegraphics[width=.8\textwidth]{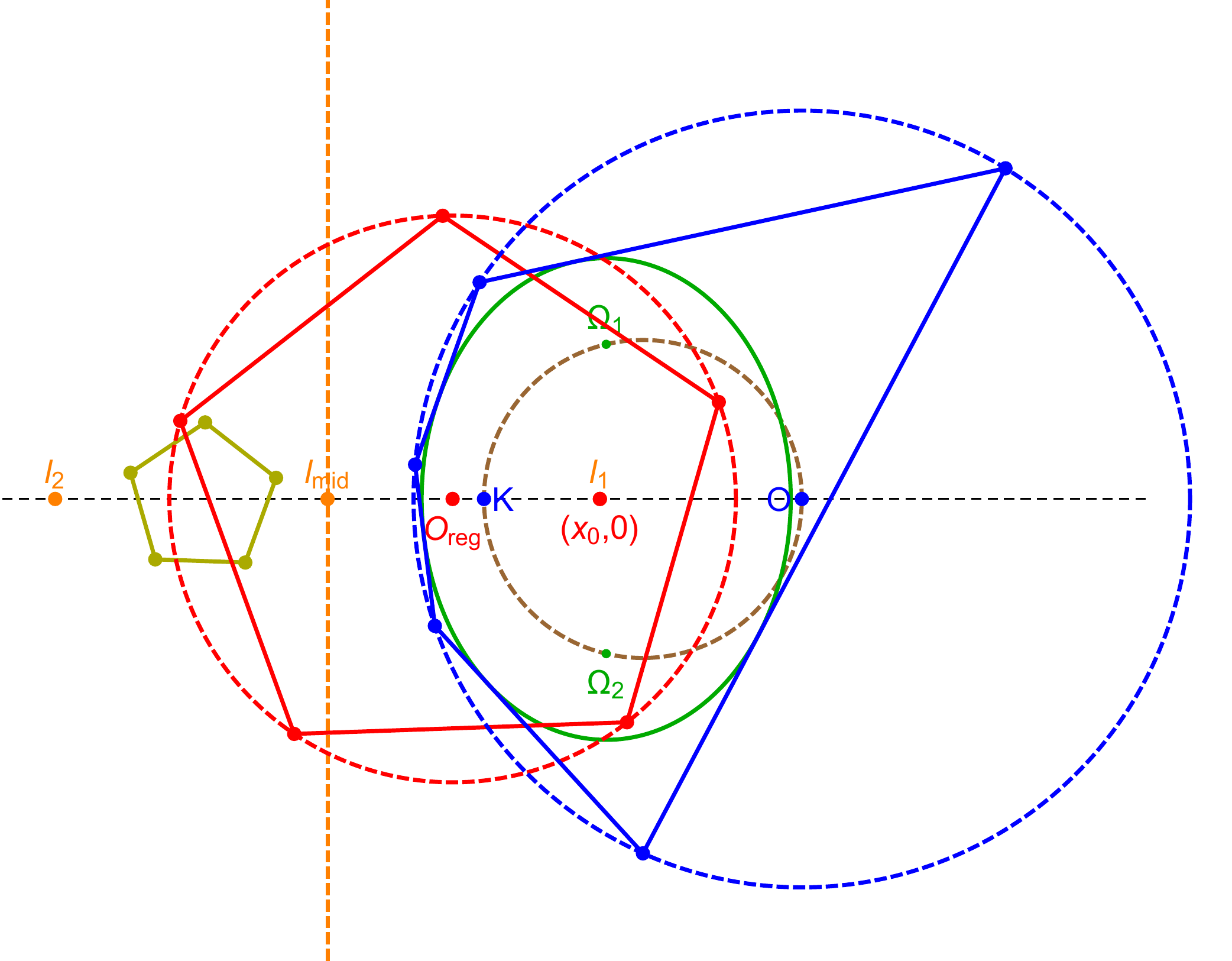}
    \caption{\textbf{Harmonics via inversion}: The vertices of a a harmonic polygon (blue) are the inversions of those of a regular polygon (red) with respect to a center of inversion $(x_0,0)$. The latter coincides with a limiting point $\ell_1$ of the pencil $\Pi$ of circles containing the circumcircle (dashed blue) and Brocard circle (dashed brown) of the resulting harmonic polygon. Conversely, the inverting the vertices of a harmonic polygon with respect to either limiting point $\ell_1$ or $\ell_2$ of $\Pi$ produces two distinct regular polygons (red and olive). Also shown is the Lemoine axis (vertical dashed orange) of the harmonic family, which cuts $OK$ at the midpoint of $\ell_1$ and $\ell_2$ and can be regarded as the infinite-radius circle in $\Pi$.}
    \label{fig:reg-inv}
\end{figure}

An equivalent, though not constructive definition, is that a polygon is harmonic if an interior point $K$ can be located such that its distance to each side is at a fixed proportion to each sidelength \cite{casey1888}. $K$ is called the {\em symmedian point}.  

While not all polygons are harmonic, all triangles are, since a symmedian point always exists\footnote{Its trilinears -- which are proportional to the distance to each side -- are, as expected, the sidelengths.}, denoted $X_6$ in  \cite{etc}. Referring to  \cref{fig:harm-n3}, the so-called ``Brocard porism'' is one of triangles interscribed between their circumcircle and fixed Brocard inellipse (whose foci are the stationary Brocard points of the family). Simmons calls these ``co-brocardal'', since all Brocard geometry objects (Brocard points, Brocard circle, Lemoine axis, etc.) remain stationary, see \cite{bradley2007-brocard,simmons1888-recent}.

\begin{figure}
    \centering
    \includegraphics[width=.8\textwidth]{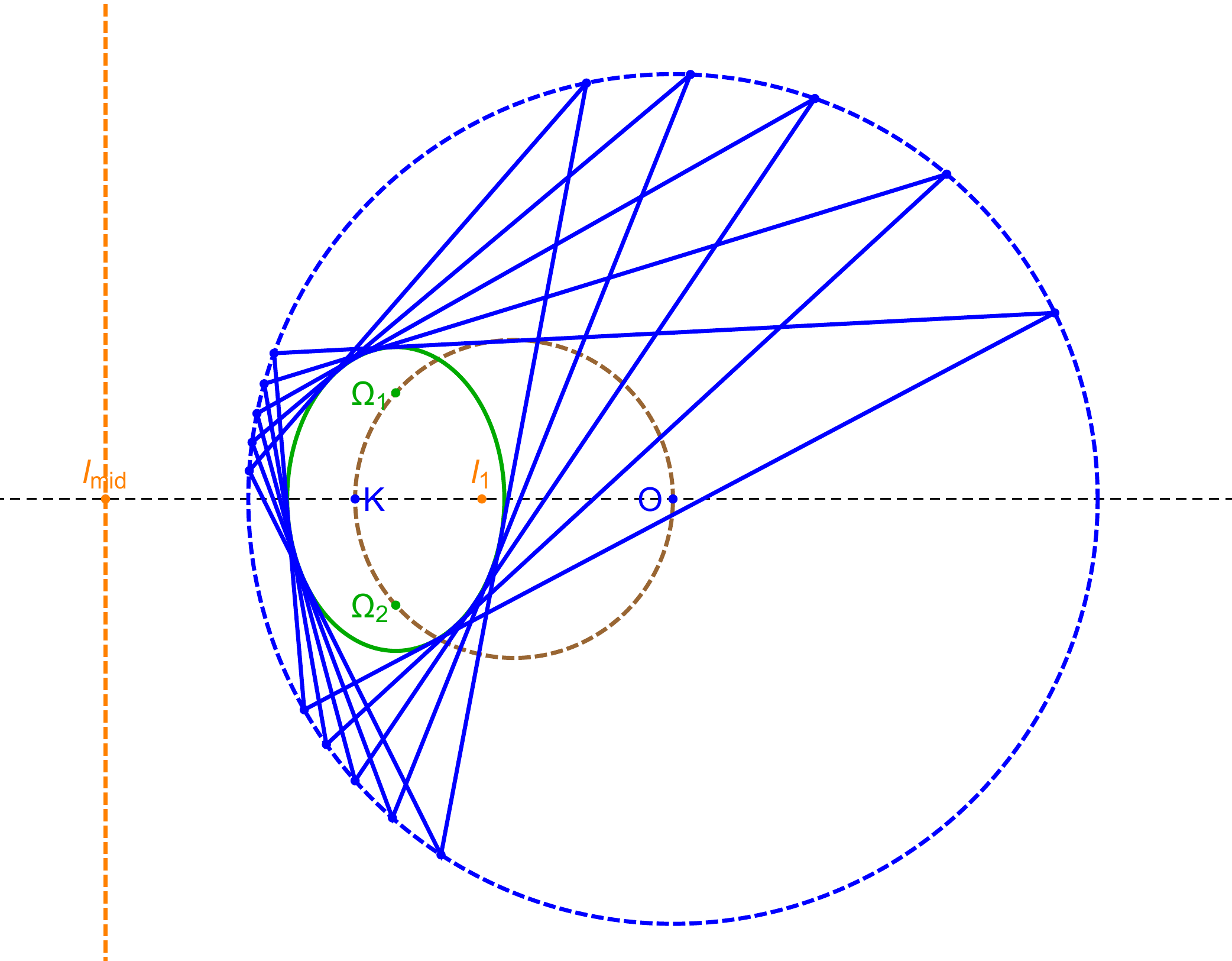}
    \caption{\textbf{The Brocard porism (all triangles are harmonic)}: A porism of triangles interscribed between their circumcircle (dashed blue) and Brocard inellipse (green). Using Kimberling's notation, $O$ and $K$ are the circumcenter $X_3$ and the symmedian point $X_6$, respectively. The pencil containing the circumcircle (dashed blue) and Brocard circle (brown) is known as the Schoute pencil \cite{johnson17-schoute}. Its limiting points $\ell_1$ and $\ell_2$ (not shown) are the two isodynamic point $X_{15}$ and $X_{16}$. Their midpoint $\ell_\text{mid}$ is $X_{187}$ on \cite{etc}.}
    \label{fig:harm-n3}
\end{figure}

Referring to \cref{fig:basic}, for any $N$, a porism of harmonic $N$-gons conserves a key quantity known as the {\em Brocard angle} $\omega$ defined as follows: an angle such that a counterclockwise (resp. clockwise) rotation of all sides $P_i P_{i+1}$ about $P_i$ will pass through $\Omega_1$ (resp. $\Omega_2$). A key identity, valid for all harmonic polygons is \cite{casey1888}:
\[ \cot\omega =  \frac{\sum{s_i^2}}{A} \]
where $s_i$ are the sidelengths and $A$ is the area, variable over the porism. I.e., this suggests that (i) the sum of internal angle cotangents and (ii) the ratio of squared sidelengths by area are conserved. Note that for $N=3$, $\cot\omega=\sum\cot\theta_i$ \cite[Brocard angle]{mw}.

\section{Harmonic family: explicit formulas}
\label{app:harm}
Consider the family of regular $N$-gons $\R$ centered on the origin and inscribed in a unit circle. Let $\P$ denote the harmonic polygon which is the inversive image of $\R$ with respect to a unit circle centered on  $C_0=[x_0,0]$. Let $\alpha=\pi/N$. The following expressions refer to objects associated with $\P$:

\subsection{Harmonic Vertices}
\begin{align*}
    [x_i,y_i]&=\left[-\frac{      \left( 1-
2\,x_0^{2} \right) \cos( 2\alpha i+t) +  x_0^{3}    }  {  2\,x_0\, \cos(2\alpha i+t)   -1-x_0^{2}   },  
  -\frac{\sin(2\alpha i+t)}{2x_0\cos(2\alpha i+t) - 1-x_0^2 }\right]
\end{align*}

\subsection{Circumcircle \torp{$\C=[O,R]$}{C}}

\[O=\left[{\frac { \left( x_0^{2}-2 \right) x_0}{ x_0^{2}-1}},0\right],\;\;\;R= \frac{1}{|x_0^{2}-1|} \]

\subsection{Brocard points \torp{$\Omega_1$,$\Omega_2$}{omega1,omega2}}

\[ \Omega_{1,2}= \frac{1}{k}\left[ 
   (2 x_0^2 - 1)\cos(2\alpha) -  x_0^4 +  x_0^2 - 1, \pm \sin(2\alpha)\right] \]
  where $k=2  x_0 \cos(2\alpha) -  x_0^3 - 1/x_0$.

\subsection{Brocard inellipse \torp{$\E$}{E}}

\begin{align*} \E:\;& \frac{(x-x_c)^2}{a^2}+\frac{y^2}{b^2}=1\\
a&= \frac{(1-x_0^2)\cos\alpha}{k'},\;\;\;b=\frac{\cos\alpha}{ \sqrt{k'}}
\end{align*}
where $x_c$ is the x-coordinate of $\Omega_1$ and $k'= (x_0^2 + 1)^2-(2 x_0 \cos\alpha)^2$. The eccentricity $\varepsilon$ of $\E$ is given by:
\[\varepsilon=\frac{c}{b}=
\frac{2|x_0|\sin\alpha}{\sqrt{(x_0^2 + 1)^2-4{x_0^2}\cos^2{\alpha}}}
\]

\subsection{Symmedian point \torp{$K$}{K}}
\[K= \left[\frac{x_0^3}{ x_0^2 + 1}, 0\right]\]

Let $\delta=|K-O|$. It can be shown that:
\[
x_0 =\frac{  1\pm\sqrt{ 1-(\delta/R)^2}     }{  (\delta/R)}
\]

Note that the product of the two possible $x_0$ is unity.
 
\subsection{Brocard circle  \torp{$\C'=[O',r]$}{C'}}

\[ O'=\left[\frac{x_0 (x_0^4 - x_0^2 - 1 )}{x_0^4 - 1},0\right],\;\;\;r =  \left|\frac{x_0}{x_0^4 - 1}\right| \]



\subsection{Limiting points \torp{$\ell_{1,2}$}{l1,2} of \torp{$\C$}{C} and \torp{$\C'$}{C'}}
\[\ell_1=[x_0,0],\;\; \ell_2=\left[\frac{x_0^2 - 1}{x_0},0\right]
\]


\subsection{Brocard angle \torp{$\omega$}{w}}

Casey gives the relation \cite[Prop. 3, pp. 209]{casey1888}:
\[\tan\omega =\sqrt{1-(\delta/R)^2}\cot\alpha\]
where $\delta = |K-O| = 2 r$. This can also be expressed as:
\[\tan\omega =
\frac{ |1-x_0^2|}{ 1+x_0^2 } \cot\alpha
\]
This implies that: 
\[ x_0 = \pm \sqrt{\frac{\cot\alpha-\tan\omega}{\cot\alpha+\tan\omega}} \]












\bibliographystyle{maa}
\bibliography{references,authors_rgk} 
\end{document}